\documentclass{aptpub}

\usepackage[a4paper]{geometry}
\usepackage{verbatim}
\usepackage{graphicx}
\usepackage{amsmath,amsfonts}
\usepackage[round]{natbib}
\usepackage{theorem}

%
%

\DeclareMathOperator{\erfc}{erfc}

\def\c{\mathrm{c}}
\def\textfrac#1#2{{\textstyle\frac{{#1}}{{#2}}}}
\def\Levy{L\'evy}

%


%

\bibliographystyle{plainnat}
\makeatletter
\AtBeginDocument{\let\@biblabel\NAT@biblabelnum}
\makeatother

\authornames{F. HUBALEK AND A. E. KYPRIANOU} 
\shorttitle{} 



\begin{document}

\title{Old and new examples of scale functions\\ for
spectrally negative L\'evy processes} 

\bigskip
\authorone[Vienna University of Technology]{F. Hubalek} 
\addresstwo{Vienna University of Technology,
Financial and Actuarial Mathematics,
Wiedner Hauptstra\ss{}e~8/105--1,
A-1040 Vienna, Austria. email: fhubalek@fam.tuwien.ac.at }
\authortwo[The University of Bath]{A. E. Kyprianou} 
\addressone{Department of Mathematical Sciences,
The University of Bath,
Claverton Down,
Bath BA2 7AY.
email: a.kyprianou@bath.ac.uk}

\begin{abstract}
We give a review of the state of the art with regard to the theory of scale functions for spectrally 
negative \Levy{} processes. From this we introduce a general method for generating new families of scale functions. Using this method we introduce a new family of scale functions belonging to the {\it Gaussian Tempered Stable Convolution} (GTSC) class.  We give particular emphasis to special
cases as well as cross-referencing their analytical behaviour against known general considerations.
\end{abstract}

\keywords{Scale functions, Spectrally negative L\'evy processes, Mittag-Leffler functions, Wiener-Hopf factorization} 

\ams{60G51, 60J75}{60G99} 

\section{Spectrally negative \Levy{} processes and scale functions}
Let $X=\{X_t : t\geq 0\}$ be a \Levy{} process defined on a
filtered probability space $(\Omega , \mathcal{F}, \mathbb{F}, \mathbb{P})$,
where $\{\mathcal{F}_t: t\geq 0\}$ is the filtration generated by $X$ satisfying the usual conditions.
For  $x\in \mathbb{R}$ denote by  $\mathbb{P}_{x}$ the law of $X$ when it is started at $x$ and
write simply  $\mathbb{P}_{0}=\mathbb{P}$. 
Accordingly we shall write $\mathbb{E}_x$ and $\mathbb{E}$ for the associated expectation operators. In this paper we shall
assume throughout that $X$ is {\it spectrally negative} meaning here that it has no positive jumps and that it is not the
negative of a subordinator. It is well known that the latter allows us to talk about the Laplace exponent $\psi(\theta):=
\log\mathbb{E}[e^{\theta X_1}]$ for $\Re(\theta)\geq 0$ where in particular we have the \Levy{}-Khintchine representation

\begin{equation}
\psi(\theta) = -a\theta + \frac{1}{2}\sigma^2\theta^2 + \int_{(-\infty, 0)}(e^{\theta x} -1- x\theta\mathbf{1}_{\{x>-1\}})\Pi(d x)
\label{Laplace}
\end{equation}
where $a\in\mathbb{R}$, $\sigma\geq 0$ is the Gaussian coefficient and $\Pi$ is a measure concentrated on $(-\infty,0)$ satisfying $\int_{(-\infty,0)}(1\wedge x^2) \Pi(dx)<\infty$. 
The, so-called, \Levy{} triple $(a,\sigma, \Pi)$ completely characterises the process $X$.

For later reference we also introduce the 
function $\Phi: [0,\infty)\rightarrow [0,\infty)$ as 
the right inverse of $\psi$  on $(0,\infty)$ so that 
for all $q\geq 0$
\begin{equation}
\Phi(q) =  \sup\{\theta\geq 0 : \psi(\theta) = q\}.
\end{equation}
Note that it is straightforward to show that  $\psi$ is a strictly convex
function which is zero at the origin and tends to infinity at infinity and hence
there are at most two solutions of the equation $\psi(\theta)=q$.

\bigskip

Suppose now we define the stopping times for each $x\in\mathbb{R}$
\begin{equation}
\tau^+_x =\inf\{t> 0 : X_t >x\}\text{ and }\tau^-_x = \inf\{t> 0 : X_t < x\}.
\end{equation}
A fluctuation identity with a long history concerns the probability that $X$ exits an interval $[0,a]$ (where $a>0$) into
$(a,\infty)$ before exiting into $(-\infty,0)$ when issued at $x\in[0,a]$.  In particular it is known that 
\begin{equation}
\mathbb{E}_x(e^{-q\tau^+_a} \mathbf{1}_{\{\tau^+_a< \tau^-_0\}}) = \frac{W^{(q)}(x)}{W^{(q)} (a)}
\label{twosidedexit}
\end{equation}
where $x\in (-\infty,a]$, $q\geq 0$ and the function $W^{(q)} : \mathbb{R}\rightarrow [0,\infty)$ is defined up to a
multiplicative constant as follows. On $(-\infty,0)$ we have $W^{(q)}(x)=0$ and otherwise $W^{(q)}$ is a continuous function
(right continuous at the origin) with Laplace transform 
\begin{equation}
\int_0^\infty e^{-\theta x}W^{(q)}(x)d x = \frac{1}{\psi(\theta) -q} \text{ for }\Re(\theta) >\Phi(q).
\label{LT}
\end{equation}

\bigskip

The functions $\{W^{(q)} : q\geq 0\}$ are known as {\it scale functions} and for convenience and consistency with existing literature we write $W$ in place of $W^{(0)}$.
Identity (\ref{twosidedexit}) exemplifies the relation between scale functions for $q=0$ and the classical ruin problem. Indeed setting $q=0$ we have that $\mathbb{P}_x(\tau^-_0<\tau^+_a) = 1-W(x)/W(a)$ and so assuming that $\psi'(0+)>0$ and taking limits as $a\uparrow\infty$ it is possible to deduce that $W(\infty)^{-1} = \psi'(0+)$ and hence 
\begin{equation}
\mathbb{P}_x(\tau^-_0<\infty)=1 - \psi'(0+)W(x).
\end{equation}
It is in this context of ruin theory that scale functions
make their earliest appearance in the works of \citet{Zol1964}, 
\citet{Tak1966} and then later either explicitly or implicitly in the work of \citet{Eme1973} 
and \citet{Kor1974,Kor1975}, \citet{Sup1976} and \citet{Rog1990}.
The real value of scale functions as a  class with which one may express a whole range of fluctuation 
identities for spectrally negative \Levy{} processes became apparent in the work of \cite{Chau94, Chau96} 
of \citet{Ber1996, Ber1997} and an ensemble of subsequent articles; 
see for example \citet{Lam2000}, \citet{AKP2004}, 
\citet{Pis2003,Pis2004,Pis2005,Pis2006}, 
\citet{KP2006}, \cite{DK2006} and \citet{Don1991,Don2005,Don2007}. 
Moreover with the advent of these new fluctuation identities and a better understanding of the analytical 
properties of the function $W^{(q)}$ came the possibility of revisiting and solving a number of classical 
and modern problems from applied probability, but now with the underlying source of randomness being a 
general spectrally negative \Levy{} processes. For example, in the theory of optimal 
stopping \citet{ACU2002, AKP2004, AK2004} and \citet{Kyp2006}, in the theory of optimal 
control \citet{APP2007} and \citet{Ronnie2007}, in the theory of queuing and storage 
models \citet{DGM2004} and \citet{BBK2007}, in the theory of branching 
processes \citet{Bin1976} and \cite{Lam2007}, in the theory of insurance risk and 
ruin \citet{CY2005}, \citet{KKM2004}, 
\citet{KK2006}, \citet{DK2006}, 
\cite{KP2007},  \citet{RZ2007}, \cite{Ronnie2007} and \citet{KRS2008}, in the theory of credit risk 
\citet{HR2002} and \citet{KS2007} and in the theory of fragmentation \cite{krell2007}. 

\bigskip

Although scale functions are now firmly embedded within the theory of spectrally negative \Levy{} 
processes and their applications, and although there is a reasonable understanding of how they 
behave analytically (see for example the summary in \citet{Kyp2006}), one of their main failings 
from a practical point of view is that there are limited number of examples which are available `off-the-shelf' for modeling purposes.

The original purpose of this paper was to introduce a previously unknown family of scale functions. However, in doing so, we
came across a number of additional examples which were seemingly less well known in the literature or are themselves new and
concurrent with this article. We shall therefore initially spend some time documenting known examples of scale
functions.
It seemed then sensible to also document known general analytical properties of scale functions and this we have also done in Section \ref{known-analytical}.

Following that we shall expose
a new family of explicit examples of scale functions as well specifying in detail the associated \Levy{}
processes. By `explicit' we mean that, as in the below, known examples, we are able to give exact analytical
expressions for the scale functions. By `specifying in detail' we mean that we shall describe the \Levy{} triple
$(a,\sigma,\Pi)$ of the associated spectrally negative \Levy{} process and in particular it will turn out that  a density for the \Levy{} measure can be obtained and we are able to identify it as a known function. Moreover,
it turns out that the associated \Levy{} processes discussed may also be described as the
independent sums of other, well known \Levy{} processes.


\section{Known examples of scale functions\label{known}}
At the time of writing, and to the best of the authors' knowledge, there are 
essentially six main examples of explicit classes; three of which deal with the case of a compound Poisson process with negative jumps and  positive drift. 
\begin{description}
\item[1.] Brownian motion with drift.
The associated Laplace exponent is written $\psi(\theta) = \sigma^2\theta^2/2 +\mu\theta$, where $\sigma>0$ and $\mu\in\mathbb{R}$ then, for $x\geq 0$,
\begin{equation}
W^{(q)}(x) = \frac{2}{\sqrt{2q\sigma^2 + \mu}}e^{-\mu x/\sigma^2}\sinh(\frac{x}{\sigma^2 }\sqrt{2q\sigma^2 + \mu})
\end{equation}
for $q\geq 0$, where in the case that $q=\mu=0$ the above expression is to be taken in the limiting sense.

\item[2.] Spectrally negative stable processes with stability parameter $\beta\in (1,2)$.
When $\psi(\theta)=\theta^\beta$ with $\beta\in(1,2)$, we have for $x\geq 0$
\begin{equation}
W^{(q)}(x) = \beta x^{\beta -1} E'_{\beta,1}(qx^\beta)
\label{stablescale}
\end{equation}
for $q\geq 0$.
where $E_{\beta,1}(z) =\sum_{k\geq 0} z^k /\Gamma(1+\beta k)$ is the Mittag-Leffler function.

There also exists  an expression for the scale function of the aforementioned $\beta$-stable process but now with a strictly positive drift (but only for the case $q=0$) implicitly in the paper of \cite{Furrer1998}.
Indeed, by considering the expression there for the ruin probability one may extract the following, 
\begin{equation}
W(x) = \frac{1}{c}\left(1-E_{\beta-1}(-c x^{\beta-1})\right)
\end{equation}
for $x\geq 0$.
Note that by taking $c\downarrow 0$ one obtains an expression which agrees with (\ref{stablescale}), namely $x^{\beta-1}/\Gamma(\beta)$.

\item[3.] 
Generally speaking it is possible to deal with a strictly positive linear drift minus a compound Poisson process process
with positive jumps with a rational Laplace transform.  The calculations are however rather extensive. See for example
\cite{Mor} where the scale function can be extracted from their expression for the ruin probability. In the special case
that the drift is 
denoted $\c$ and jumps arrive at rate $\lambda$ and are exponentially distributed with 
parameter $\mu$ such that $\c -\lambda/\mu >0$, that is to say $\psi(\theta) =  \c\theta - \lambda(1 - \mu(\mu+\theta)^{-1})$, we have for the case $q=0$  and $x\geq 0$,
\begin{equation}
W(x)  = \frac{1}{\c}\left(1 + \frac{\lambda }{\c \mu - \lambda}(1 - e^{(\mu - \c^{-1}\lambda)x})\right).
\end{equation}

The expression for the scale function in the case that $q>0$ with exponentially distributed jumps, and indeed for the
slightly more general case where an independent Gaussian component is added in, can in principle be extracted from
\cite{KouWang}. 

\item[4.] This example is taken from \cite{Asm2000} and again comes from the case of a known ruin probability giving a scale function for the case $q=0$ and positive drift. Consider a spectrally negative compound Poisson process whose jumps are exactly of size $\alpha\in(0,\infty)$, 
with arrival rate $\lambda >0$ and with positive drift $c>0$ such that $c-\lambda\alpha>0$. In that case, $\psi(\theta) = c\theta - \lambda(1-e^{-\alpha\theta})$ and for $x\geq 0$ we have

\begin{equation}
W(x) = \frac{1}{c}\sum_{n=1}^{\lfloor x/\alpha\rfloor} e^{-\lambda(\alpha n - x)/c}\frac{1}{n!}\left(\frac{\lambda}{c}\right)^n (\alpha n - x)^n
\end{equation}
where $\lfloor x/\alpha\rfloor$ is the integer part of $x/\alpha$.

\item[5.]  The next example comes from \cite{Abate-Whitt} who themselves  refer to older work of \cite{Boxma-Cohen}.

Consider a unit-rate linear drift minus a compound Poisson process of rate
$\lambda>0$ with jumps whose distribution $F$ has Laplace transform given by 
\begin{equation}
\int_0^\infty e^{-\theta x}F(dx)=
1 - \frac{\theta}{(\mu+\sqrt{\theta})(1+\sqrt{\theta})}
\end{equation}
for $\Re(\theta)\geq 0$
The latter Laplace transform corresponds to a random variable whose mean is equal to $\mu^{-1}>0$ and whose tail distribution takes the form
\begin{equation}
F(x,\infty) = (2x+1)\eta(x)- 2\sqrt{\frac{x}{\pi}}, \, x\geq 0
\end{equation}
when $\mu=1$ and otherwise when $\mu\neq 1$
\begin{equation}
F(x,\infty) = \left(\frac{1}{1-\mu}\right)(\eta(x) - \mu\eta(x\mu^2)), \, x\geq 0
\end{equation}
where $\eta(x)=e^x\text{erfc}(\sqrt{x})$. 
It is assumed that the underlying L\'evy process drifts to $\infty$. Since $\mathbb{E}(X_1)=1-\lambda/\mu$ the latter assumption is  tantamount to $\lambda/\mu<1$.

Said another way, we are interested in a spectrally negative L\'evy process with Laplace exponent given by 
\begin{equation}
\psi(\theta) = \theta - \frac{\lambda\theta}{(\mu+\sqrt{\theta})(1+\sqrt{\theta})}.
\end{equation}
For the scale function with $q=0$ it is known that for $x\geq 0$
\begin{equation}
W(x) =\frac{1}{1-\lambda/\mu}\left(1 - \frac{\lambda/\mu}{\nu_1 - \nu_2}(\nu_1 \eta(x\nu_2^2) - \nu_2\eta(x\nu_1^2))\right).
\end{equation}
and 
\begin{equation}
\nu_{1,2} = \frac{1+\mu}{2}\pm\sqrt{\left(\frac{1+\mu}{2}\right)^2 - \left(1-\frac{\lambda}{\mu}\right)\mu}.
\end{equation}

\item[6.] The final two examples, both scale functions only for the case $q=0$, appeared very recently in the theory of positive self-similar Markov processes, see \cite{CKP}. See also \cite{CC06} for the  origin of the underling L\'evy processes. The first example is the scale function which belongs to a spectrally  negative L\'evy process with no Gaussian component, 
whose \Levy{} measure takes the form
\begin{equation}
\Pi(dy)=\frac{e^{(\beta-1)y}}{(e^y -1)^{\beta +1}}dy, \, y<0
\end{equation}
where $\beta\in(1,2)$ and whose Laplace exponent takes the form
\begin{equation}
\psi(\theta) = \frac{\Gamma(\theta-1+\beta)}{\Gamma(\theta-1)\Gamma(\beta)}
\end{equation}
for $\Re(\theta)\geq 0$.
Note that $\psi'(0+)<0$ and hence the process drifts to $-\infty$.
In that case  it was found that for $x\geq 0$
\begin{equation}
W(x) = (1-e^{-x})^{\beta-1}e^x.
\end{equation}

The second example is the scale function associated with the aforementioned
L\'evy process when conditioned to drift to $\infty$. It follows that there is
still no Gaussian component  and the \Levy{} measure takes the form 
\begin{equation}
\Pi(dy)=\frac{e^{\beta y}}{(e^y -1)^{\beta +1}}d y, \, y<0
\end{equation}
and the associated Laplace exponent is given by 
\[
\psi(\theta) = \frac{\Gamma(\theta+\beta)}{\Gamma(\theta)\Gamma(\beta)}
\]
for $\Re(\theta)\geq 0$.
The scale function is then given for $x\geq 0$ by
\begin{equation}
W(x) = (1-e^{-x })^{\beta-1}.
\end{equation}
\end{description}

\section{Known analytic properties of scale functions.}\label{known-analytical}
Although at the time of writing further examples other than those above are lacking, there are a collection of general properties known for scale  functions,
mostly concerning their behaviour at $0$ and $\infty$. For later reference in this text  and to give credibility to
some we review them briefly here. As usual,  $(a, \sigma, \Pi)$ denotes  the \Levy{} triple of a general spectrally
negative \Levy{} process. 
\subsubsection*{Smoothness} 
The following facts are taken from \citet{Lam2000}, \citet{CK2007}, \citet{KRS2008} and  \citet{doneySdP}. It is known that if $X$ has paths of bounded variation then,  for all $q\geq 0$,
$W^{(q)}|_{(0,\infty)}\in C^1(0,\infty)$ if and only if $\Pi$ has no atoms.  
In the case that $X$ has paths of  unbounded variation, it is known that, for all $q\geq 0$, $W^{(q)}|_{(0,\infty)}\in
C^1(0,\infty)$.  Moreover if $\sigma > 0$ then $C^1(0,\infty)$ may be replaced by $C^2(0,\infty)$. It was also
noted  by Renming Song (see the remarks in \citet{CK2007}) that if $\Pi(-\infty, -x)$ is completely monotone  then
$W^{(q)}|_{(0,\infty)}\in C^\infty(0,\infty)$. 
\subsubsection*{Concavity and convexity} 
It was shown in \cite{Ronnie2007} that the latter assumption 
that $\Pi(-\infty, -x)$ is completely monotone also implies that $W^{(q)\prime}(x)$ is convex for $q>0$. Note in particular, 
the latter implies that there exists an $a^*\geq 0$ such that $W^{(q)}$ is 
concave on $(0,a^*)$ and convex on $(a^*,\infty)$. In the case that $\psi'(0+)\geq 0$ and $q=0$ the argument in \cite{CK2007} shows that $a^*=\infty$ and $W$ is necessarily concave.
In \cite{KRS2008} it  is shown that if $\Pi(-\infty, -x)$ has a density on $(0,\infty)$ which is non-increasing and log-convex then for each $q\geq 0$, the scale function $W^{(q)}(x)$ and its first derivative are convex beyond some finite value of $x$.

\subsubsection*{Continuity at the origin} For all $q\geq 0$, 
\begin{equation} 
W^{(q)}(0+) =  \left\{ 
\begin{array}{ll} 0 & \mbox{if $\sigma>0$ or $\int_{(-1,0)}(-x)\Pi(d x)=\infty$} \\
\c^{-1} & \mbox{if $\sigma=0$ and $\int_{(-1,0)}(-x)\Pi(d x)<\infty$},  
\end{array} \right. 
\label{W(0)}
\end{equation} 
where $\c = -a - \int_{(-1,0)} x \Pi(d x)$.
\subsubsection*{Derivative at the origin} 
For all $q\geq 0$, 
\begin{equation} 
W^{(q)\prime}(0+) = \left\{ 
\begin{array}{ll}
2/\sigma^2 & \text{ if } \sigma >0\\ 
\infty & \text{ if }\sigma=0 \text{ and } \Pi(-\infty,0)=\infty\\ 
(q +\Pi(-\infty, 0))/\c^2 & \text{ if }\sigma=0 \text{ and }\Pi(-\infty, 0)<\infty. 
\end{array} \right. 
\label{W'(0)}
\end{equation}
\subsubsection*{Behaviour at $\infty$ for $q=0$} 
As $x\uparrow\infty$ we have 
\begin{equation}\label{W(infty)} 
W(x)\sim \left\{ 
\begin{array}{ll}
1/\psi'(0+) & \text{ if } \psi'(0+) > 0 \\ 
e^{\Phi(0)x}/\psi'(\Phi(0)) & \text{ if }\psi'(0+)<0. 
\end{array} \right. 
\end{equation} 
When $\mathbb{E}(X_1)=0$  a number of different asymptotic behaviours may
occur.  For example, if writing $\psi(\theta)=\theta\phi(\theta)$ (which is possible by the Wiener-Hopf factorization), we have $\phi'(0+)<\infty$ then
 $W(x)\sim x/\phi'(0+) $ as $x\uparrow\infty$. 
\subsubsection*{Behaviour at $\infty$ for $q>0$} 
As $x\uparrow\infty$ we have 
\begin{equation}\label{Wq(infty)}
W^{(q)}(x)\sim e^{\Phi(q)x}/\psi'(\Phi(q)) 
\end{equation} 
and thus there is asymptotic exponential growth.
\section{Methodology for new examples}
For any given spectrally negative \Levy{} process, the scale functions are intimately connected to 
descending ladder height process and this forms the key to  constructing new examples. 
Therefore we shall briefly review the connection between scale functions and the descending ladder 
height process before describing the common methodology that leads to the new examples of scale functions.
For a more detailed account of this connection, the reader is referred to the books of \citet{Ber1996},  \citet{Kyp2006} or \cite{Don2007}.

It is straightforward to show that the process $X-\underline{X}:=\{X_t-\underline{X}_t:t\geq 0\}$, 
where $\underline{X}_t := \inf_{s\leq t}X_s$, is a strong Markov process with state space $[0,\infty)$. 
Following standard theory of Markov local times (cf. Chapter IV of \cite{Ber1996}), it is possible to construct a  local time at zero 
for $X-\underline{X}$ which we henceforth refer to as $L=\{L_t : t\geq 0\}$. Its inverse process, $L^{-1}:=\{L^{-1}_t : t\geq 0\}$ where $L^{-1}_t =\inf\{s>0 : L_s >t\}$, 
is a (possibly killed) subordinator.
Sampling $X$ at $L^{-1}$ we recover the points of minima of $X$. If we define $H_t =X_{L^{-1}_t}$ 
when $L^{-1}_t<\infty$ with $H_t  =\infty$ otherwise, then it is known that the 
process $H=\{H_t : t\geq 0\}$ is a (possibly killed) subordinator. The latter is known as 
the {\em descending ladder height process}. Moreover,  if $\Upsilon$ is 
the \Levy{} measure of $H$ then 
\begin{equation}
\Upsilon(x,\infty) = e^{\Phi(0)x}\int_x^\infty e^{-\Phi(0)u}\Pi(-\infty,-u)d u\text{ for }x>0,
\end{equation}
see for example \citet{Vig2002}.
Further, the subordinator has a drift component if and only if $\sigma>0$ in which case the 
drift is necessarily equal to $\sigma^2/2$. The killing rate of $H$ is given by the constant
$\mathbb{E}(X_1)\vee 0$.

The starting point for the relationship between the descending ladder height process and scale functions 
is given by the Wiener-Hopf factorization. In `Laplace form' for spectrally negative L\'evy processes this takes the appearance
\begin{equation}
\psi(\theta) = (\theta -\Phi(0))\phi(\theta)
\label{WHfact}
\end{equation}
where 
\begin{equation}\label{phipsi}
\phi(\theta) = -\log \mathbb{E}(e^{-\theta H_1}),\qquad
\psi(\theta)=\log\mathbb{E}(e^{\theta X_1}),
\end{equation}
and $\Re(\theta)\geq 0$.
With this form of the Wiener-Hopf factorization in mind, we appeal principally to two techniques.
\begin{enumerate}
\item We choose the process $H$, or equivalently $\phi(\theta)$, so that the Laplace inversion of
(\ref{LT}) may be performed. In some cases, when the process does not drift to $-\infty$ (or equivalently $\psi'(0+)\geq 0$), it can be worked to ones advantage that, after an integration by parts, one also has that
\begin{equation}
\int_0^\infty e^{- \theta x}W(dx) = \frac{1}{\phi(\theta) }
\label{useK}
\end{equation}
for $\Re(\theta)>0$.

\item We choose the process $H$ to be such that its semigroup $\mathbb{P}(H_t\in d x)$ is known 
in explicit form and then make use of the following identity
(see for example \citet{Ber1996} or \citet{Kyp2006}),
\begin{equation}
\int_0^\infty d t \cdot \mathbb{P}(H_t \in d x) = W( dx) , \, x\geq 0, 
\label{resolvent2}
\end{equation}
whenever $X$ does not drift to $-\infty$.
\end{enumerate}

\noindent Naturally, forcing a choice of descending Ladder height process, or equivalently  $\phi$, requires one to know that a 
spectrally negative \Levy{} process, $X$, exists whose Laplace exponent respects the
factorization (\ref{WHfact}). The next Theorem provides the necessary justification. Indeed, to some extent it shows  how to construct a L\'evy process with a given descending ladder height process as well as a prescribed ascending ladder height process. For spectrally negative L\'evy processes, the ascending ladder height process is a (killed) linear unit drift and has only a single parameter, namely $\Phi(0)$, the killing rate. Hence the construction in the below theorem offers a parameter $\varphi$ which plays the role of $\Phi(0)$. Since both ascending and descending ladder height process cannot both be killed one also sees the parameter restriction 
\begin{equation}
\varphi\kappa =0
\end{equation} 
where $\kappa$ is the killing rate of the descending ladder height process.

\begin{theorem}\label{inverse}
Suppose that $H$ is a subordinator, killed at rate $\kappa\geq 0$, with 
\Levy{} measure which is absolutely continuous with non-increasing density and drift $\zeta$. Suppose further that $\varphi\geq 0$ is 
given such that $\varphi\kappa=0$. Then
there exists a spectrally negative \Levy{} process  $X$, henceforth referred to as the `parent process', 
such that for all $x\geq 0$, $\mathbb{P}(\tau^+_x <\infty)=e^{-\varphi x}$ and whose descending ladder 
height process is precisely the process $H$. The \Levy{} triple $(a,\sigma, \Pi)$ of the parent process 
is uniquely identified as follows.
The Gaussian coefficient is given by $\sigma = \sqrt{2\zeta}$. The \Levy{} measure is given by 
\begin{equation}
\Pi(-\infty, -x) = \varphi\Upsilon(x,\infty)+ \frac{ d \Upsilon(x)}{d x}.
\label{density}
\end{equation}
Finally 
\begin{equation}
a =\int_{(-\infty, -1)} x\Pi(d x)  - \kappa
\end{equation}
if $\varphi=0$ and otherwise when $\varphi>0$ we can establish the value of $a$ from the equation
\begin{equation}
a\varphi = \frac{1}{2}\sigma^2\varphi^2 + \int_{(-\infty, 0)}(e^{\varphi x} -1- x\varphi\mathbf{1}_{\{x>-1\}})\Pi(d x).
\end{equation}
In all cases, the Laplace exponent of the parent process is also given by
\begin{equation}
\psi(\theta) = (\theta-\varphi)\phi(\theta)
\end{equation}
for $\theta \geq 0$ where $\phi(\theta) = - \log \mathbb{E}(e^{-\theta H_1})$. 
\end{theorem}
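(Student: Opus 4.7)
The plan is to build the parent process in three stages: first, define a candidate Laplace exponent $\psi(\theta):=(\theta-\varphi)\phi(\theta)$, where $\phi$ is the Laplace exponent of $H$, and rearrange it into the spectrally negative \Levy{}--Khintchine form; second, verify that the triple $(a,\sigma,\Pi)$ so extracted really defines a spectrally negative \Levy{} process; third, use uniqueness of the Wiener--Hopf factorization to identify this process as the desired parent process with descending ladder height $H$ and upper passage probability $e^{-\varphi x}$.

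For the algebraic step I would substitute the \Levy{}--Khintchine form of $\phi$, invoke $\varphi\kappa=0$, and rewrite
\[
\int_{(0,\infty)}(1-e^{-\theta x})\Upsilon(dx)=\theta\int_0^\infty e^{-\theta y}\Upsilon(y,\infty)dy
\]
by Fubini. Multiplying out $(\theta-\varphi)\phi(\theta)$ and integrating by parts (legitimate since $\Upsilon$ admits a density $\upsilon$) reorganises the mixed contribution into $\int_{(-\infty,0)}(e^{\theta x}-1-\theta x\mathbf{1}_{\{x>-1\}})\Pi(dx)$ plus a linear drift. The $\zeta\theta^2$ term supplies $\tfrac{1}{2}\sigma^2\theta^2$ with $\sigma=\sqrt{2\zeta}$; matching tails reads off precisely the expression (\ref{density}) for $\Pi$; and collecting the residual linear coefficient gives the stated formulas for $a$ in the two cases $\varphi=0$ and $\varphi>0$ (the latter being equivalently recovered from $\psi(\varphi)=0$).

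The main obstacle is checking that this $\Pi$ is a bona fide \Levy{} measure. Differentiating (\ref{density}) with respect to $x$, the candidate density on $(-\infty,0)$ satisfies $\pi(-x)=\varphi\upsilon(x)-\upsilon'(x)$, and it is exactly here that the non-increasing-density hypothesis enters in an essential way: it forces $-\upsilon'\geq 0$ and hence $\pi\geq 0$. For the required integrability $\int_{(-\infty,0)}(1\wedge x^2)\Pi(dx)<\infty$, the $\varphi\upsilon(x)$ piece is dominated by the subordinator condition $\int(1\wedge x)\Upsilon(dx)<\infty$, while the $-\upsilon'(x)$ piece is handled by an integration by parts that bounds $\int_0^1 x^2(-\upsilon'(x))dx$ in terms of $\int_0^1 x\,\upsilon(x)dx$. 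Once these checks are in place the \Levy{}--Khintchine theorem produces a spectrally negative \Levy{} process $X$ with Laplace exponent exactly $\psi$.

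Finally, the probabilistic identifications are almost automatic. Strict convexity of $\psi$ together with $\psi(0)=0$ (which follows from $\varphi\kappa=0$) and $\psi(\varphi)=0$ forces $\Phi(0)=\varphi$, and standard spectrally negative Wiener--Hopf theory then yields $\mathbb{P}(\tau^+_x<\infty)=e^{-\Phi(0)x}=e^{-\varphi x}$. Matching $\psi(\theta)=(\theta-\varphi)\phi(\theta)$ against the general factorization (\ref{WHfact})--(\ref{phipsi}) for $X$, uniqueness (after the canonical normalization of the descending local time) identifies $\phi$ as the Laplace exponent of the descending ladder height of $X$, so that this process coincides with the given $H$, completing the construction.
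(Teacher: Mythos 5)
Your proof is correct, and it is in fact more complete than the one the paper gives. The paper's argument is a one-paragraph sketch that works ``forwards'': it takes the previously quoted facts about the descending ladder height process of a spectrally negative \Levy{} process (Vigon's identity for $\Upsilon(x,\infty)$ in terms of $\Pi$, the drift of $H$ being $\sigma^2/2$, the killing rate being $\mathbb{E}(X_1)\vee 0$, and $\mathbb{P}(\tau^+_x<\infty)=e^{-\Phi(0)x}$) and simply inverts these relations to read off the triple $(a,\sigma,\Pi)$, declaring the rest ``reasonably self evident.'' You instead argue ``backwards'' and constructively: you define the candidate exponent $\psi(\theta)=(\theta-\varphi)\phi(\theta)$, extract a \Levy{}--Khintchine triple by Fubini and integration by parts, and then --- crucially --- verify that the resulting $\Pi$ in (\ref{density}) is a bona fide \Levy{} measure, with nonnegativity of the density $\varphi\upsilon(x)-\upsilon'(x)$ coming from the non-increasing density hypothesis and integrability from $\int(1\wedge x)\Upsilon(dx)<\infty$; existence of $X$ then follows from the \Levy{}--Khintchine theorem and the identifications $\Phi(0)=\varphi$ and $H=$ descending ladder height from the uniqueness of the Wiener--Hopf factorization (\ref{WHfact}). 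What your route buys is precisely the existence statement that the paper's sketch takes for granted, and it makes visible where the standing hypothesis on $H$ is actually used; what the paper's route buys is brevity, since it leans entirely on the already-stated ladder-height formulas. One small point worth making explicit in your last step: when $\varphi=0$ you should note that $\psi'(0+)=\phi(0)=\kappa\geq 0$, so that $\Phi(0)=0=\varphi$ also in that case, which is what makes the two cases for $a$ consistent with $\mathbb{E}(X_1)=-a+\int_{(-\infty,-1)}x\,\Pi(dx)$ and the killing rate $\kappa=\mathbb{E}(X_1)\vee 0$.
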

\begin{proof}
The proof is reasonably self evident given the preceding account of the ladder height process. 
One needs only the additional information that if $X$ is any spectrally negative \Levy{} process with 
\Levy{} measure $\Pi$ then 
\begin{equation}
\mathbb{E}(X_1) = -a + \int_{(-\infty, -1)} x\Pi(d x).
\end{equation}
Moreover when $\Phi(0)>0$ then necessarily the descending ladder height process has no killing and 
when the descending ladder height process is killed then $\Phi(0)=0$. Further, for all $x\geq 0$
$\mathbb{P}(\tau^+_x <\infty) =e^{-\Phi(0)x}$. 
\hfill\hfill\hfill$\square$\end{proof}

\bigskip

The idea of working `backwards' through the Wiener-Hopf factorization as we have done above can also be found in \citet{BRY} and \citet{Vigonthesis}.
Note that it is more practical to describe the parent process in terms of the triple $(\sigma,\Pi,\psi)$ than the triple $(a, \sigma, \Pi)$ and we shall frequently do this in the sequel.
It is also worth making an observation for later reference concerning the path variation of the 
process $X$ for a given a descending ladder height process $H$.

\begin{corollary}\label{variation?}
 Given a killed subordinator $H$ satisfying the conditions of the previous Theorem, 
\begin{description}
 \item[(i)]  the parent process has paths of unbounded variation if and only if $\Upsilon(0,\infty)=\infty$ or 
 $\zeta>0$,
\item[(ii)] if $\Upsilon(0,\infty)=\lambda <\infty$ then the parent process necessarily decomposes in the form 
\begin{equation}\label{driftterm}
X_t=(\kappa+\lambda-\zeta\varphi)t+\sqrt{2\zeta}B_t-S_t
\end{equation}
where $B=\{B_t: t\geq 0\}$ is a Brownian motion, $S=\{S_t : t\geq 0\}$ is an independent driftless 
subordinator with \Levy{} measure $\nu$ satisfying $\nu(x,\infty) = \Pi(-\infty, -x)$.
\end{description}
\end{corollary}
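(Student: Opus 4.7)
The plan is to derive both parts from the \Levy--It\^o decomposition of the parent process, combined with the identity $\Pi(-\infty,-x)=\varphi\,\Upsilon(x,\infty)+\upsilon(x)$ from Theorem~\ref{inverse}, in which $\upsilon$ denotes the non-increasing density of $\Upsilon$.

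For part (i) I would invoke the classical dichotomy that a spectrally negative \Levy{} process has paths of unbounded variation if and only if $\sigma>0$ or $\int_{-\infty}^0(|x|\wedge 1)\,\Pi(dx)=\infty$. Since $\sigma=\sqrt{2\zeta}$, the Gaussian clause is equivalent to $\zeta>0$. For the jump clause, Fubini gives $\int_{-\infty}^0(|x|\wedge 1)\,\Pi(dx)=\int_0^1\Pi(-\infty,-y)\,dy$, and substituting the identity above yields
\begin{equation*}
\int_0^1\Pi(-\infty,-y)\,dy = \varphi\int_0^1\Upsilon(y,\infty)\,dy + \int_0^1\upsilon(y)\,dy.
\end{equation*}
A second Fubini identifies $\int_0^1\Upsilon(y,\infty)\,dy$ with $\int_0^\infty(1\wedge z)\,\Upsilon(dz)<\infty$, while $\int_0^1\upsilon(y)\,dy=\Upsilon(0,1)$. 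As $\Upsilon(1,\infty)$ is automatically finite for a subordinator \Levy{} measure, finiteness of the right-hand side reduces to $\Upsilon(0,\infty)<\infty$, which is exactly~(i).

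For part (ii), the hypothesis $\Upsilon(0,\infty)=\lambda<\infty$ forces $\int_{(-1,0)}|x|\,\Pi(dx)<\infty$ by the calculation above, so the jump part has bounded variation and the \Levy--It\^o decomposition takes the compensator-free form
\begin{equation*}
X_t = d\,t + \sqrt{2\zeta}\,B_t + \sum_{s\le t}\Delta X_s,\qquad d = -a-\int_{(-1,0)}x\,\Pi(dx).
\end{equation*}
Spectral negativity makes the jump sum equal $-S_t$ with $S$ a driftless subordinator whose tail satisfies $\nu(y,\infty)=\Pi(-\infty,-y)$, as claimed. What remains is to identify $d=\kappa+\lambda-\zeta\varphi$.

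In the case $\varphi=0$, substituting $a=\int_{(-\infty,-1)}x\,\Pi(dx)-\kappa$ and recognising (via the same Fubini step as in (i)) that $\int_{(-\infty,0)}|x|\,\Pi(dx)=\int_0^\infty\upsilon(y)\,dy=\lambda$ gives $d=\lambda+\kappa$ at once. In the case $\varphi>0$ (so $\kappa=0$), rearranging the second displayed equation for $a$ in Theorem~\ref{inverse} yields
\begin{equation*}
d = -\zeta\varphi + \varphi^{-1}\!\int_{(-\infty,0)}(1-e^{\varphi x})\,\Pi(dx),
\end{equation*}
and the one non-trivial step is to show that this remaining integral equals $\varphi\lambda$. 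I would change variables to $y=-x$ and integrate by parts to rewrite it as $\varphi\int_0^\infty e^{-\varphi y}\Pi(-\infty,-y)\,dy$, then substitute $\Pi(-\infty,-y)=\varphi\Upsilon(y,\infty)+\upsilon(y)$ and integrate by parts once more in the $\upsilon$-term (using $\upsilon(y)\,dy=\Upsilon(dy)$); the two $\Upsilon$-tail Laplace transforms that appear then cancel exactly, leaving $\varphi\lambda$. The main obstacle, such as it is, is this final identification --- everything else amounts to bookkeeping on the \Levy--It\^o decomposition.
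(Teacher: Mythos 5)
Your proposal is correct, and for part (ii) it takes a genuinely different route from the paper. For part (i) you are simply expanding the paper's one-line remark (``follows directly from (\ref{density}) and $\sigma=\sqrt{2\zeta}$'') into the standard bounded-variation criterion plus two applications of Fubini; that is the same argument, made explicit. For part (ii), however, the paper proceeds in the opposite direction: it takes the claimed decomposition (\ref{driftterm}), writes down its Laplace exponent using $\nu(x,\infty)=\Pi(-\infty,-x)=\varphi\Upsilon(x,\infty)+\tfrac{d\Upsilon}{dx}(x)$, performs one integration by parts in the variable $\theta$-transform, and checks that the result factors as $(\theta-\varphi)\bigl(\kappa+\zeta\theta+\int_0^\infty(1-e^{-\theta x})\tfrac{d\Upsilon}{dx}(x)dx\bigr)=\psi(\theta)$ --- a single computation valid for all parameter values, with no case split. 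You instead work forwards from the \Levy--It\^o decomposition, identify the drift $d=-a-\int_{(-1,0)}x\,\Pi(dx)$, and evaluate it from the two formulas for $a$ in Theorem~\ref{inverse}, separately for $\varphi=0$ and $\varphi>0$; your key cancellation of the two tail Laplace transforms of $\Upsilon$ is precisely the paper's integration by parts specialised to $\theta=\varphi$, and it does come out to $\varphi\lambda$ as you claim. The paper's verification is slicker and uniform in the parameters; your version has the merit of showing explicitly where the constant $\kappa+\lambda-\zeta\varphi$ originates, namely from the normalisation of $a$ and the finiteness of $\int_{(-1,0)}|x|\Pi(dx)$ established in part (i). Both are complete proofs.
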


\begin{proof} The path variation of $X$ follows directly from (\ref{density}) and the fact that $\sigma=\sqrt{2\zeta}$. Also using (\ref{density}), the Laplace exponent of the decomposition (\ref{driftterm}) can be computed as follows with the help of an integration by parts;
\begin{eqnarray*}
\lefteqn{(\kappa + \lambda -\zeta\varphi)\theta + \zeta\theta^2 - \varphi\theta\int_0^\infty e^{-\theta x}\Upsilon(x,\infty)dx - \theta\int_0^\infty e^{-\theta x}\frac{d\Upsilon}{dx}(x)dx}&&\\
&=& (\kappa + \Upsilon(0,\infty) -\zeta\varphi)\theta + \zeta\theta^2 - \varphi\int_0^\infty (1-e^{-\theta x})\frac{d\Upsilon}{dx}(x)dx - \theta\int_0^\infty e^{-\theta x}\frac{d\Upsilon}{dx}(x)dx\\
&=&(\theta-\varphi)\left(\kappa + \theta \zeta + \int_0^\infty (1-e^{-\theta x})\frac{d\Upsilon}{dx}(x)dx\right).
\end{eqnarray*}
This agrees with the Laplace exponent $\psi(\theta)=(\theta-\varphi)\phi(\theta)$ of the parent process constructed in Theorem \ref{inverse}. 
\hfill\hfill\hfill$\square$\end{proof}

\section{The Gaussian tempered stable convolution class}\label{class1}
In this paper we introduce a new 
family of spectrally negative L\'evy processes from which our new examples of scale 
functions will emerge.
We call them  {\em Gaussian tempered stable convolution}, GTSC for short.
When there is no Gaussian part we call the distribution a tempered stable convolution
and write TSC for short.
\subsection{The tempered stable ladder process}
The starting point for the construction is a {\em tempered stable subordinator}
plus a linear drift, possibly killed, that will play the role of the 
descending ladder height process for the GTSC parent process.
Some references for tempered stable distributions and tempered stable \Levy{} processes
are \cite{steutelvanharn}, \cite{cont} and \cite{schoutens}. Note, that several names and origins, and many different
parameterisations are used for the tempered stable distributions and processes.

In our parametrization, the tempered stable subordinator involves three parameters,
the {\em stability parameter} $\alpha<1$, 
the {\em tempering parameter} $\gamma\geq0$, and the {\em scaling parameter} $c>0$.
When $\alpha\leq0$ it is required that $\gamma>0$. Furthermore we might add a linear
drift with rate $\zeta\geq0$ to the process, and possibly kill the process at 
rate $\kappa\geq0$.

The Laplace exponent of the tempered stable subordinator, and henceforth the descending ladder 
height process is thus taken to be$^1$\footnote{$^1$Note that we 
use $\Gamma(z)$ as a meromorphic function 
with simple poles at the non-positive integers.
By the functional equation of the gamma function we have
$\Gamma(-\alpha)=-\alpha^{-1}\Gamma(1-\alpha)$, and $\Gamma(-\alpha)<0$ 
for $\alpha\in(0,1)$.}
\begin{equation}\label{ts-phi}
\phi(\theta)=
\kappa+\zeta\theta+c\Gamma(-\alpha)(\gamma^\alpha-(\gamma+\theta)^\alpha),
\qquad\Re(\theta)>-\gamma,
\end{equation}
and the associated \Levy{} measure is given by
\begin{equation}\label{ts-xi}
\Upsilon(dx)=cx^{-\alpha-1}e^{-\gamma x}dx\qquad(x>0).
\end{equation}
For $0\leq\alpha<1$ the process has infinite activity.
For $\alpha=0$ the expression (\ref{ts-phi}) is to be understood in a limiting sense, i.e.,
$\phi(\theta)=\kappa+\zeta\theta-c\log\left(\gamma/(\gamma+\theta)\right)$, and
the process is simply a (killed) gamma subordinator (with drift).
If $\alpha<0$ the process is a compound Poisson process with intensity parameter
$c\Gamma(-\alpha)\gamma^\alpha$ and gamma distributed jumps with $-\alpha$ degrees of freedom 
and exponential parameter $\gamma>0$.
\subsection{The associated parent process
}\label{TSparent}
We may now invoke Theorem~\ref{inverse} to construct the associated GTSC process.
This introduces another parameter $\varphi\geq0$. 
To be able to apply the theorem, we need a decreasing \Levy{} density, and thus
we have to restrict the scaling parameter to $-1\leq\alpha<1$.
The theorem tells us that the Laplace exponent of the parent process is 
\begin{equation}\label{psiTSC}
\psi(\theta)=(\kappa-\varphi\zeta)\theta+\zeta\theta^2
+c(\theta-\varphi)\Gamma(-\alpha)(\gamma^\alpha-(\gamma+\theta)^{\alpha}).
\end{equation}
for $\Re(\theta)>\gamma$, and obviously for $\alpha=0$ we understand the above expression in the limiting sense so that $\psi(\theta)=(\kappa+\zeta\varphi)\theta+\zeta\theta^2
-c(\theta+\varphi)\log\left({\gamma}/{\gamma+\theta}\right)$. It is important to recall here and throughout the remainder of the paper that $\kappa\varphi=0$.
The corresponding \Levy{} measure given by 
\begin{equation}\label{cmconv}
\Pi(dx)=c\frac{(\varphi+\gamma)}{(-x)^{\alpha+1}}e^{\gamma x}dx+c\frac{(\alpha+1)}{(-x)^{\alpha+2}}e^{\gamma x}dx
\end{equation}
for $x<0$. This
indicates that the jump part is the result of the independent sum of 
two spectrally negative tempered stable processes with stability parameters $\alpha$ and $\alpha+1$.
We also note from Theorem \ref{inverse} that $\sigma=\sqrt{2\zeta}$, indicating the presence of a Gaussian component.  
This motivates the choice of terminology {\em Gaussian Tempered Stable Convolution}.

If $0<\alpha<1$ the jump component is the sum of an infinite activity negative tempered stable subordinator
and an independent spectrally negative tempered stable process with infinite variation. 
If $\alpha=0$ the jump component is the sum of 
a spectrally negative infinite variation tempered stable process with stability parameter $1$ and 
exponential parameter $\gamma$
and the negative of a gamma subordinator with exponential parameter $\gamma$.
If $-1\leq\alpha<0$ 
the jump part of the parent process is the independent sum 
tempered stable subordinator with stability parameter $1+\alpha$ 
and exponential parameter $\gamma$,
and an independent negative compound Poisson subordinator with
 jumps from a gamma distribution with $-\alpha$ degrees of freedom
and exponential parameter $\gamma$. 
In the extreme case $\alpha=-1$, the parent process has negative jumps which are compound Poisson and 
exponentially distributed with parameter $\gamma$.
\section{Evaluating GTSC scale and $q$-scale functions}
\subsection{The case with rational nonzero stability parameter}
\begin{theorem}\label{rational}
Suppose $\alpha=m/n$
with
$m\in\mathbb Z$, $n\in\mathbb Z$, $0<|m|<n$, and $\gcd(|m|,n)=1$.
Let us consider the GTSC process with Laplace exponent
\begin{equation}
\psi(\theta)=
(\theta-\varphi)
\left[
\kappa+\zeta\theta+c\Gamma(-\alpha)\left(\gamma^\alpha-(\gamma+\theta)^\alpha\right)\right].
\end{equation}
Let $m_+=\max(m,0)$ and $m_-=\max(-m,0)$. Then the polynomial
\begin{equation}
f_q(z)=
(z^n-\gamma-\varphi)
\left[
(\kappa+\zeta\gamma+c\Gamma(-\alpha)\gamma^\alpha)z^{m_-}+\zeta z^{n+m_-}-c\Gamma(-\alpha)z^{m_+})
\right]-qz^{m_-}
\end{equation}
has at least one real root.
Let $\ell$ denote the number of distinct roots,
$r_1,\ldots,r_\ell$ the roots, arranged such that $r_1$ is the largest real root, 
and $\mu_1,\ldots,\mu_\ell$ their multiplicities.
Let $A_{kj}$ for $k=1,\ldots,\ell$, $j=0,\ldots,\mu_k-1$ denote the 
coefficients in the partial fraction decomposition
\begin{equation}
\frac{z^{m_-}}{f_q(z)}=
\sum_{k=1}^\ell\sum_{j=0}^{\mu_k-1}\frac{A_{kj}}{(z-r_k)^{j+1}}.
\end{equation}
Then we have
\begin{equation}\label{Phiq}
\Phi(q)=r_1^n-\gamma
\end{equation}
and
\begin{equation}\label{main-formula}
W^{(q)}(x)=e^{-\gamma x}\sum_{k=1}^\ell\sum_{j=0}^{\mu_k-1}A_{kj}
\frac1{j!}x^{(j+1)/n-1}
E_{\frac1n,\frac1n}^{(j)}(r_kx^{\frac1n}).
\end{equation}
\end{theorem}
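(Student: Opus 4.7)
The plan is to start from identity~(\ref{LT}), namely $\int_0^\infty e^{-\theta x}W^{(q)}(x)\,dx = 1/(\psi(\theta)-q)$ for $\Re(\theta)>\Phi(q)$, and to reduce the transcendental function $1/(\psi(\theta)-q)$ to a rational function in a new variable $z$. Under the change of variable $\theta=z^n-\gamma$, equivalently $z=(\gamma+\theta)^{1/n}$ on the principal branch, the only non-polynomial piece of $\psi$, namely $(\gamma+\theta)^{\alpha}=(z^n)^{m/n}=z^m$, becomes a monomial. Substituting gives $\psi(z^n-\gamma)-q$ as a Laurent polynomial in $z$; multiplying by $z^{m_-}$ to clear the negative powers yields precisely the polynomial $f_q(z)$ of the statement, together with the key algebraic identity
\begin{equation*}
\frac{1}{\psi(\theta)-q} \;=\; \frac{z^{m_-}}{f_q(z)},\qquad z=(\gamma+\theta)^{1/n}.
\end{equation*}

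Next I would establish (\ref{Phiq}). The map $z\mapsto z^n-\gamma$ is a strictly increasing bijection of $[\gamma^{1/n},\infty)$ onto $[0,\infty)$, and the extra factor $z^{m_-}$ is positive there, so the real roots of $f_q$ in $[\gamma^{1/n},\infty)$ correspond exactly to the non-negative real solutions of $\psi(\theta)=q$. Strict convexity of $\psi$, together with $\psi(0)-q\le 0$ and $\psi(\theta)\to\infty$, guarantees that $\Phi(q)\ge 0$ exists as the largest such solution, so $r_1=(\Phi(q)+\gamma)^{1/n}$ is the largest real root of $f_q$, yielding both (\ref{Phiq}) and the existence claim.

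For the inversion I would invoke the standard Mittag-Leffler Laplace identity
\begin{equation*}
\int_0^\infty e^{-sx}x^{1/n-1}E_{1/n,1/n}(\lambda x^{1/n})\,dx \;=\; \frac{1}{s^{1/n}-\lambda},
\end{equation*}
obtained by termwise integration of the defining series via $\int_0^\infty e^{-sx}x^{(k+1)/n-1}\,dx = \Gamma((k+1)/n)/s^{(k+1)/n}$. Writing $s=\theta+\gamma$ so that $z=s^{1/n}$, differentiating $j$ times in $\lambda$ (which pulls out an $x^{j/n}$ in the integrand while producing $j!/(z-\lambda)^{j+1}$ on the right), dividing by $j!$ and setting $\lambda=r_k$ gives
\begin{equation*}
\int_0^\infty e^{-\theta x}\,\frac{e^{-\gamma x}}{j!}\,x^{(j+1)/n-1}\,E^{(j)}_{1/n,1/n}(r_k x^{1/n})\,dx \;=\; \frac{1}{(z-r_k)^{j+1}}.
\end{equation*}
Since $\deg(z^{m_-})<\deg(f_q)$, visible directly from the expression for $f_q$, the partial fraction decomposition in the statement is valid with no polynomial part; multiplying each summand by $A_{kj}$, summing over $(k,j)$ and comparing Laplace transforms identifies the right-hand side of (\ref{main-formula}) as the pre-image of $1/(\psi(\theta)-q)$. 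Uniqueness of the Laplace transform then pins down $W^{(q)}$.

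The main technical issue is that the roots $r_k$ may be complex or of multiplicity greater than one, so one must justify that the Mittag-Leffler identity remains valid for complex $\lambda$ and that term-by-term Laplace inversion is legitimate. The former is automatic because $E_{1/n,1/n}$ is entire with series converging uniformly on compacts, so the termwise integration and iterated differentiation in $\lambda$ are justified by absolute convergence of the resulting series on the relevant right half-plane. The latter follows from uniqueness of the Laplace transform combined with the observation that the growth of the individual summands in (\ref{main-formula}) is controlled by the Mittag-Leffler asymptotics, so all of the relevant transforms converge on a common right half-plane containing $\{\Re(\theta)>\Phi(q)\}$, matching (\ref{LT}).
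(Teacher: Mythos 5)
Your proposal is correct and follows essentially the same route as the paper: the substitution $\theta=z^n-\gamma$ reducing $1/(\psi(\theta)-q)$ to $z^{m_-}/f_q(z)$, identification of $r_1$ via the largest real root $\Phi(q)$ of $\psi(\theta)=q$, partial fractions, and term-by-term Laplace inversion using the tilted Mittag--Leffler transform pairs. The only difference is cosmetic: the paper cites a reference for the Mittag--Leffler Laplace identity where you derive it by termwise integration, and you spell out the convexity/monotonicity argument for the root correspondence a little more explicitly.
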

\begin{proof} We have 
\begin{equation}\label{f-psi}
f_q(z)=z^{m_-}\left(\psi(z^n-\gamma)-q\right)
\end{equation}
for $z\in\mathbb Z$ with $|\arg(z)|<\pi/n$, and conversely, 
\begin{equation}\label{psi-f}
\frac1{\psi(\theta)-q}=\frac{(\theta+\gamma)^{\frac{m_-}{n}}}{f_q((\theta+\gamma)^{\frac1n})}
\end{equation}
for $\theta\in\mathbb C\setminus(-\infty,-\gamma])$ such that the denominator on the
left hand side does not vanish.
We know from the theory of scale functions that $\theta=\Phi(q)$ is a real root
of $\psi(\theta)-q=0$, and there is no larger real root of that equation.
Thus $z=(\Phi(q)+\gamma)^{\frac1n}$ is a real root of $f_q(z)=0$ and there is no
larger real root of that equation. This shows~(\ref{Phiq}).

The left hand side of (\ref{psi-f}) is the Laplace transform of $W^{(q)}(x)$, and we know it
is an analytic function for $\Re(\theta)>\Phi(q)$. 
Using the partial fraction decomposition we get 
\begin{equation}\label{tscLT}
\int_0^\infty e^{-\theta x}W^{(q)}(x)dx=
\sum_{k=1}^\ell \sum_{j=0}^{\mu_k-1}\frac{A_{kj}}{((\gamma+\theta)^{\frac1n}-r_k)^{j+1}},
\end{equation}
and this equation is valid for any $q\geq 0$ and $\Re(\theta)>\Phi(q)$.
We recognize the Laplace transform of exponentially tilted derivatives of Mittag-Leffler
functions on the right hand side, see for example \cite[Prop.7.1.9, p.359]{JacIII},
and obtain (\ref{main-formula}).
\hfill\hfill$\square$\end{proof}

\bigskip

Generally speaking this is rather an awkward formula to work with analytically. 
The main strength of the expression lies with it being a simple matter to program 
into a package such as MATLAB or Mathematica. 
It is instructive to revisit the (known) analytical properties 
of $W(x)$ and $W^{(q)}(x)$ listed in section \ref{known-analytical} and check them 
directly for our explicit expression as this will yield in most cases more detail.

\bigskip

\noindent{\bf  Smoothness:}  Since the Mittag-Leffler functions are entire functions,
we see that $W^{(q)}(x)$, which is {\it a priori} defined for $x\in(0,\infty)$,
admits an analytic continuation
to $x\in\mathbb{C}\setminus(-\infty,0]$, and thus $W^{(q)}$ is $C^\infty$ on $(0,\infty)$.

\bigskip

\noindent{\bf  Behaviour at zero:} We obtain the behaviour at zero from the power series expansion of the derivatives 
of the Mittag-Leffler function. Exploiting algebraic relations of the $r_k$ and $A_{kj}$
several terms cancel and we obtain for $\zeta>0$
\begin{equation}
W^{(q)}(x)\sim\frac{x}{\zeta},\qquad
W^{(q)\prime}(x)\sim\frac1\zeta
\qquad(x\to0).
\end{equation}
A similar argument applies for $\zeta=0$. If $0<\alpha<1$ we get 
\begin{equation}
W^{(q)}(x)\sim -\frac{x^\alpha}{c\Gamma(-\alpha)\Gamma(1+\alpha)},\qquad 
W^{(q)\prime}(x)\sim-\frac{ x^{\alpha-1}}{c\Gamma(-\alpha)\Gamma(\alpha)},\qquad (x\to0).
\end{equation}
In the case that $-1<\alpha<0$ and $\zeta=0$.
we get again by cancellation of terms and limiting behaviour at zero of Mittag-Leffler functions
\begin{equation}
W^{(q)}(x)\sim\frac{1}{\kappa+c\gamma^\alpha\Gamma(-\alpha)},\qquad 
W^{(q)\prime}(x)\sim\frac{cx^{-\alpha-1}}{(\kappa+c\gamma^\alpha\Gamma(-\alpha))^2},\qquad (x\to0).
\end{equation}
Note, another way to establish these last two results is by Karamata's Tauberian Theorem 
and the Monotone Density Theorem respectively from the asymptotics 
of $\psi(\theta)$ as $\theta\to\infty$.
This approach shows, that the asymptotics above also hold for irrational $\alpha\in(-1,1)$.

\bigskip

\noindent{\bf  Behaviour at infinity:} 
The behaviour at infinity can be obtained from the asymptotics of the Mittag-Leffler functions
at infinity. The dominating contribution comes from the term with $r_1$.
It is useful to recall that $f_q(z)=\psi(z^n-\gamma)-q$ and 
thus $f_q'(z)=nz^{n-1}\psi'(z^n-\gamma)$.
Let us consider first $q=0$.
Note, that we have $\psi'(0+)=\kappa-\varphi(\zeta+c\gamma^{\alpha-1}\Gamma(1-\alpha))$.
Suppose $\psi'(0+)>0$, which implies that $\varphi=0$. Then $r_1=\gamma^{\frac1n}$
is a single root of $f_0(z)$. 
After some elementary simplifications we obtain
\begin{equation}
W(x)\sim\frac{1
}{\kappa}\qquad(x\to\infty)
\end{equation}
in agreement with the theory.
Suppose now $\psi'(0+)<0$. Then $r_1=(\varphi+\gamma)^{\frac1n}$ is a simple root of $f_0(z)$.
As above, using the asymptotics of the Mittag-Leffler functions at infinity,
we see that the term with $r_1$ dominates all other terms.
After elementary simplifications we obtain 
\begin{equation}
W(x)\sim \frac{
e^{\varphi x}}{\kappa+\zeta\varphi+c\Gamma(-\alpha)(\gamma^\alpha - (\gamma+\varphi)^\alpha)}
\qquad(x\to\infty).
\end{equation}
Consider  the  third case, $\psi'(0+)=0$. This can only happen when $\kappa=0$ 
and $\varphi=0$. Then $r_1=\gamma^{\frac1n}$ 
is a double root of $f_0(z)$.
The asymptotics show, that the contribution from $r_1$ dominates
and we obtain
\begin{equation}
W(x)\sim\frac{
x}{\zeta+c\gamma^{\alpha-1}\Gamma(1-\alpha)}
\qquad(x\to\infty).
\label{becauseofrenewal}
\end{equation}

\noindent Finally in the case $q>0$, little more can be said than (\ref{Wq(infty)}).
\subsection{The case with general nonzero stability parameter $\alpha\in(-1,1)$}\label{CSBexample}
When we take $q=0$ and $\zeta=0$ and $\alpha\in(-1,1)\backslash\{0\}$ 
(without the restriction of begin a rational number) one may obtain much cleaner expressions for the scale function 
than the formulation in Theorem \ref{rational}.
\begin{theorem}\label{niceform}
Suppose $\alpha\in(-1,1)\setminus\{0\}$ and consider a TSC process (without
Gaussian component) and Laplace exponent
\begin{equation}
\psi(\theta)=(\theta-\varphi)\left[\kappa+c\Gamma(-\alpha)
\left(\gamma^\alpha-(\gamma+\theta)^\alpha\right)\right].
\end{equation}
Then 
If $0<\alpha<1$ 
then
\begin{equation}
W(x)=-\frac{e^{\varphi x}}{c\Gamma(-\alpha)}\int_0^xe^{-(\gamma+\varphi)y}y^{\alpha-1}
E_{\alpha,\alpha}\left(\frac{\kappa+c\Gamma(-\alpha)\gamma^\alpha y^\alpha}{c\Gamma(-\alpha)}\right)dy,
\end{equation}
if $-1<\alpha<0$, then
\begin{eqnarray}
\hspace{-1cm}W(x)&=&
\frac{e^{\varphi x}}{\kappa+c\Gamma(-\alpha)\gamma^\alpha}\notag\\
&&+\frac{c\Gamma(-\alpha)e^{\varphi x}}{(\kappa+c\Gamma(-\alpha)\gamma^\alpha)^2}\int_0^xe^{-(\gamma+\varphi)y}y^{-\alpha-1}
E_{-\alpha,-\alpha}\left(\frac{c\Gamma(-\alpha)y^{-\alpha}}{\kappa +c\Gamma(-\alpha)\gamma^\alpha}\right)dy.
\end{eqnarray}
\end{theorem}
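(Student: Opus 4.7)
My plan is to invert the Laplace transform of $W$ directly. Since $\psi(\theta)=(\theta-\varphi)\phi(\theta)$ with
\[
\phi(\theta)=\kappa + c\Gamma(-\alpha)\gamma^\alpha - c\Gamma(-\alpha)(\gamma+\theta)^\alpha,
\]
the defining relation \eqref{LT} at $q=0$ gives $\int_0^\infty e^{-\theta x} W(x)\,dx = \frac{1}{(\theta-\varphi)\phi(\theta)}$ for $\Re(\theta)>\Phi(0)$. Since this factorises as a product of two Laplace transforms, $W$ must equal the convolution of the inverses. The factor $1/(\theta-\varphi)$ is the Laplace transform of $e^{\varphi x}\mathbf 1_{x\geq 0}$, so the entire computation reduces to inverting $1/\phi(\theta)$ and then applying the convolution identity $W(x)=e^{\varphi x}\int_{[0,x]} e^{-\varphi y}\, g(dy)$, where $g$ is the (signed) measure with Laplace transform $1/\phi(\theta)$.

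For $0<\alpha<1$ I will write
\[
\frac{1}{\phi(\theta)}=\frac{-1/(c\Gamma(-\alpha))}{(\gamma+\theta)^\alpha-\lambda},\qquad \lambda=\frac{\kappa+c\Gamma(-\alpha)\gamma^\alpha}{c\Gamma(-\alpha)},
\]
and invoke the standard Laplace identity
\[
\int_0^\infty e^{-sy} y^{\alpha-1} E_{\alpha,\alpha}(\lambda y^\alpha)\,dy = \frac{1}{s^\alpha - \lambda}
\]
(valid for $s$ large enough, cf.\ Jacob, Vol.\ III, Prop.\ 7.1.9), together with the shift rule $s\mapsto \gamma+\theta$. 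This identifies $g(y)=-\frac{1}{c\Gamma(-\alpha)}e^{-\gamma y}y^{\alpha-1}E_{\alpha,\alpha}(\lambda y^\alpha)$. Feeding this into the convolution formula produces the stated expression for $W(x)$.

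For $-1<\alpha<0$ the function $(\gamma+\theta)^\alpha$ now vanishes at infinity, so $\phi(\theta)\to K:=\kappa+c\Gamma(-\alpha)\gamma^\alpha$ and a naive application of the Mittag--Leffler identity fails. The algebraic trick is to split off the constant term:
\[
\frac{1}{\phi(\theta)} = \frac{1}{K} + \frac{B/K^2}{(\gamma+\theta)^{-\alpha}-B/K},\qquad B=c\Gamma(-\alpha),
\]
which is readily verified by direct computation. The first summand corresponds to a Dirac mass $\delta_0/K$ in $g$, which accounts exactly for the constant term $e^{\varphi x}/K$ in the claimed formula (and is consistent with $W(0+)=1/K$ predicted by \eqref{W(0)} for this bounded-variation case). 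The second summand is inverted by the same Mittag--Leffler identity applied with parameter $-\alpha\in(0,1)$ and shift $\gamma$. Combining the two pieces via the convolution with $e^{\varphi x}$ yields the formula displayed in the theorem.

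The main obstacle is not any single calculation but rather (i) performing the correct algebraic rearrangement of $1/\phi(\theta)$ so that the Mittag--Leffler Laplace-transform identity becomes directly applicable in both sign regimes of $\alpha$, and (ii) justifying the convergence of the convolution integral at $y=0$ (where the density is $y^{\alpha-1}$ for $\alpha>0$, integrable, and where the singularity of $y^{-\alpha-1}$ is compensated by the factor $E_{-\alpha,-\alpha}(\cdot)\sim y^{-\alpha}/\Gamma(-\alpha)^{-1}$ style cancellations in the $\alpha<0$ case). Uniqueness of the inversion then follows because both sides are continuous functions of $x$ with Laplace transforms agreeing on a right half-plane.
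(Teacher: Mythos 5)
Your proposal is correct and follows essentially the same route as the paper: the paper's proof likewise rests on the two Mittag--Leffler Laplace-transform identities (its second identity, with the ``$-1$'' on the right-hand side, encodes exactly your constant/Dirac-mass split for $-1<\alpha<0$) combined with tilting by $\gamma$ and the primitive/convolution rule that produces the factor $1/(\theta-\varphi)$. You merely spell out the algebra that the paper leaves implicit; the only slight imprecision is your remark about needing a cancellation at $y=0$ in the $\alpha<0$ case, where in fact $y^{-\alpha-1}$ is already integrable since $-\alpha-1>-1$.
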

\begin{proof} This follows from the known Laplace transforms
\begin{equation}
\int_0^\infty e^{-\theta x}x^{\alpha-1}E_{\alpha,\alpha}(\lambda x^\alpha)dx=\frac1{\theta^\alpha-\lambda}
\end{equation}
and
\begin{equation}
\int_0^\infty e^{-\theta x}\lambda^{-1}x^{-\alpha-1}E_{-\alpha,-\alpha}(\lambda^{-1} x^{-\alpha})dx=
\frac\lambda{\lambda-\theta^\alpha}-1,
\end{equation}
valid for $\alpha>0$ resp.\ $\alpha<0$ together with the 
the well-known rules for Laplace transforms concerning, primitives and tilting.
\hfill\hfill$\square$\end{proof}

\bigskip

With these  closed-form expressions, the explicit asymptotics discussed after Theorem \ref{rational} are immediate by inspection. Note in the particular case that $\kappa=-c\Gamma(-\alpha)\gamma^\alpha $ the expression for the scale function in the case $0<\alpha<1$ reduces simply to 
\begin{equation}
W(x)=\frac{1}{\kappa}\int_0^x 
\frac{\gamma^\alpha}{\Gamma(\alpha)} y^{\alpha-1} e^{-\gamma y}dy.
\end{equation}

Note that although the Laplace inversions involved in the proof of Theorem \ref{niceform} are straightforward, it is not clear that the resulting expressions for $W$ are scale functions without the presence of Theorem \ref{inverse}.

\subsection{The case with $\alpha=\frac12$:  inverse Gaussian descending ladder height process\label{ig-special}}
We will assume now that $\zeta=0$, $\varphi=0$, $\kappa=0$.
Substituting $\alpha\mapsto1/2$, $c\mapsto\delta/\sqrt{2\pi}$, 
$\gamma\mapsto\gamma^2/2$ reveals that the tempered stable subordinator we have been working with 
is in fact the familiar inverse Gaussian $IG(\delta,\gamma)$-subordinator
when $\alpha=1/2$. It turns out for this case that we also get some cleaner expressions than the one given in Theorem \ref{rational}.


When the descending ladder height process 
is the $IG(\delta,\gamma)$ subordinator,
the jump part of the parent process 
is a superposition of a negative IG subordinator and a spectrally 
negative tempered stable process with stability parameter~$3/2$.

Following the general approach in Theorem \ref{rational} 
we relate $\psi(\theta)$ to a polynomial, which, in the
IG parametrization of this section, is $f_q(z)=\psi(z^2-\gamma^2/2)-q$.
The first simplification is, that $f_q(z)$ is a polynomial of degree~4
if $\zeta>0$, and degree~3, if $\zeta=0$.
Thus completely explicit, elementary (and somewhat lengthy) 
expressions of its roots
can be given in terms of radicals by the formul\ae{} of Ferrari resp.\ Cardano.
The second simplification concerns the Mittag-Leffler functions.
As $\alpha=1/2$ the Mittag-Leffler function simplifies to 
an expression involving the more familiar (complementary) error function.

We find that $f_0(z)$ has a positive double a negative single root,
and after some elementary simplifications,
\begin{equation}\label{igw0}
W(x)=
\frac1{2\delta\gamma}\left[
(1+\gamma^2x)\erfc\left(-\gamma\sqrt{x/2}\right) 
+\gamma\sqrt{2x/\pi}e^{-\frac12\gamma^2x}-1
\right].
\end{equation}
Actually, in this special case the result can be verified faster by calculating the
Laplace transform of~(\ref{igw0}).
As a side remark, we note, 
that $W''(x)=-x^{-3/2}e^{-\frac12\gamma^2x}/(2\sqrt{2\pi}\delta)$
and thus we see, that $W$ is indeed concave, as we already know from the theory.
Let us now consider $q>0$ and put
\begin{equation}
q_0=\textfrac{16}{27}\delta\gamma^3.
\end{equation}
If $0<q<q_0$ then we have three simple real roots $r_1,r_2,r_3$, that can be obtained by Cardano's 
formula$^2$\footnote{$^2$Explicitly
$$\begin{array}{l}
r_1=\frac{\gamma}{3\sqrt{2}}+\frac{2\sqrt{2}\delta\gamma^2}{3\sqrt[3]{-8\delta^3\gamma^3+27\delta^2q+3\sqrt{3}\sqrt{27\delta^4q^2-16\delta^5\gamma^3q}}}+\frac{\sqrt[3]{-8\delta^3\gamma^3+27\delta^2q+3\sqrt{3}\sqrt{27\delta^4q^2-16\delta^5\gamma^3q}}}{3\sqrt{2}\delta}\\
r_2=\frac{\gamma}{3\sqrt{2}}-\frac{\sqrt{2}\left(1+i\sqrt{3}\right)\delta\gamma^2}{3\sqrt[3]{-8\delta^3\gamma^3+27\delta^2q+3\sqrt{3}\sqrt{27\delta^4q^2-16\delta^5\gamma^3q}}}-\frac{\left(1-i\sqrt{3}\right)\sqrt[3]{-8\delta^3\gamma^3+27\delta^2q+3\sqrt{3}\sqrt{27\delta^4q^2-16\delta^5\gamma^3q}}}{6\sqrt{2}\delta}\\
r_3=\frac{\gamma}{3\sqrt{2}}-\frac{\sqrt{2}\left(1-i\sqrt{3}\right)\delta\gamma^2}{3\sqrt[3]{-8\delta^3\gamma^3+27\delta^2q+3\sqrt{3}\sqrt{27\delta^4q^2-16\delta^5\gamma^3q}}}-\frac{\left(1+i\sqrt{3}\right)\sqrt[3]{-8\delta^3\gamma^3+27\delta^2q+3\sqrt{3}\sqrt{27\delta^4q^2-16\delta^5\gamma^3q}}}{6\sqrt{2}\delta}.
\end{array}
$$
If $q=\delta\gamma^3/2$ those formulae simplify, as one root is then zero.},
and
\begin{equation}
W^{(q)}(x)=e^{-\frac12\gamma^2x}\left[
\frac{r_1e^{r_1^2x}}{f_q'(r_1)}\erfc(-r_1\sqrt{x})
+\frac{r_2e^{r_2^2x}}{f_q'(r_2)}\erfc(-r_2\sqrt{x})
+\frac{r_3e^{r_3^2x}}{f_q'(r_3)}\erfc(-r_3\sqrt{x})
\right].
\end{equation}
If $q>q_0$ we have a simple real root $r_1$ and two complex conjugate simple roots $r_2$ and $r_3$.
The same formulas hold as in the previous case.
Alternatively, we could in the complex case write
\begin{equation}
W^{(q)}(x)=e^{-\frac12\gamma^2x}\left[
\frac{r_1e^{r_1^2x}}{f_q'(r_1)}\erfc(-r_1\sqrt{x})
+2\Re\left\{
\frac{r_2e^{r_2^2x}}{f_q'(r_2)}\erfc(-r_2\sqrt{x})
\right\}
\right].
\end{equation}
If $q=q_0$ then we have a simple positive and a double negative root, and after some easy
simplifications,
\begin{eqnarray}
W^{(q_0)}(x)&=&
\frac1{36\delta\gamma}\left[
6\gamma\sqrt{\frac{2x}{\pi}}e^{-\frac12\gamma^2x} 
+ 
15e^{\frac89\gamma^2x}\erfc\left(-\frac{5\gamma}{3}\sqrt{\frac{x}{2}}\right)\right. \notag\\
&&\hspace{3cm}
\left.- 
e^{-\frac49\gamma^2x}(15+2\gamma^2x)\erfc\left(\frac\gamma3\sqrt{\frac{x}{2}}\right)
\right].
\end{eqnarray}
\subsection{The stable case}
Because of their definition via their Laplace transform, it is immediate that scale functions are continuous in the parameters of the parent process. Taking $\gamma\downarrow 0$ in the Laplace exponent of the tempered stable ladder height process shows us that the parent process is an $\alpha+1$ stable process when $\kappa=\varphi=\zeta=0$ and $c=-1/\Gamma(-\alpha)$. Taking limits as $\gamma\downarrow0$ in the expression for the scale function in the first part of Theorem \ref{niceform} gives us the known result $W(x)  =
x^{\alpha}/\Gamma(\alpha+1)$.

\subsection{The case $\alpha=0$: gamma descending ladder height process}\label{case-gamma}
When $\alpha=0$ we can only treat the special case with $q=0$, 
$\kappa=0$, $\zeta=0$ and $\varphi=0$.
\begin{theorem}
\begin{equation}
W(x)=\int_0^\infty P(ct,\gamma x)dt,
\end{equation}
where $P(a,x)$ is the regularized lower incomplete gamma function
\end{theorem}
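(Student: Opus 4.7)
The plan is to apply the subordinator resolvent identity~(\ref{resolvent2}). Under the standing hypotheses $\alpha=0$, $\kappa=\zeta=\varphi=0$, the limiting form of~(\ref{ts-phi}) reads $\phi(\theta)=c\log(1+\theta/\gamma)$, so the descending ladder height process $H$ is a pure gamma subordinator, and for each $t>0$ the random variable $H_t$ admits the density $p_t(y)=\gamma^{ct}y^{ct-1}e^{-\gamma y}/\Gamma(ct)$ on $(0,\infty)$.

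Since $\varphi=0$, the factorisation furnished by Theorem~\ref{inverse} reduces to $\psi(\theta)=\theta\phi(\theta)$, hence $\psi'(0+)=\phi(0)=0$ and the parent process does not drift to $-\infty$. Thus~(\ref{resolvent2}) is applicable and gives $W(dy)=\bigl(\int_0^\infty p_t(y)\,dt\bigr)dy$ on $(0,\infty)$; moreover the $c(\alpha+1)(-x)^{-\alpha-2}e^{\gamma x}=c\,e^{\gamma x}/x^2$ term of~(\ref{cmconv}) forces $\int_{(-1,0)}(-x)\Pi(dx)=\infty$, so by~(\ref{W(0)}) we have $W(0+)=0$. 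Integrating from $0$ to $x$ and swapping the two integrations (Tonelli applies since the integrand is non-negative), I would obtain $W(x)=\int_0^\infty\mathbb{P}(H_t\leq x)\,dt$, where the absence of an atom at $0$ reflects the absolute continuity of $H_t$ for $t>0$. The change of variable $u=\gamma y$ inside the gamma cumulative distribution function finally identifies $\mathbb{P}(H_t\leq x)=P(ct,\gamma x)$, which is the claimed formula.

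The only delicate step is verifying the finiteness of the outer $t$-integral, and hence the Tonelli interchange. This is guaranteed a priori by the finiteness of $W(x)$ coming from the Laplace transform~(\ref{LT}) at $q=0$, but can also be checked directly: the integrand $P(ct,\gamma x)$ is bounded by $1$ uniformly (with no issue near $t=0$ since the interval of integration there is bounded), while the estimate $P(ct,\gamma x)\leq(\gamma x)^{ct}/\Gamma(ct+1)$ combined with Stirling's formula gives super-exponential decay as $t\to\infty$. Hence the main obstacle — justifying the Tonelli swap and the convergence of the outer integral — reduces to a short asymptotic estimate on the regularised incomplete gamma function.
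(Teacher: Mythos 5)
Your proof is correct and follows essentially the same route as the paper: both rest on the resolvent identity (\ref{resolvent2}) applied to the gamma descending ladder height process, combined with $W(0+)=0$, followed by an integration in $x$. The only cosmetic difference is that the paper first records the density $W'(x)=c^{-1}x^{-1}e^{-\gamma x}\varphi(-\log(\gamma x))$ via the Laplace transform of the reciprocal gamma function before integrating, whereas you integrate the potential measure directly and identify the regularized incomplete gamma function, with some welcome extra care over the Tonelli interchange.
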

\begin{proof}
We may appeal to the second method in (\ref{resolvent2}) using resolvents   to note that 
the scale function density satisfies
\begin{equation}\label{Wprime0}
W'(x)=\frac1cx^{-1}e^{-\gamma x}\varphi(-\log(\gamma x))
\end{equation}
where
\begin{equation}
\varphi(\theta)=\int_0^{\infty}\frac{e^{-\theta x}}{\Gamma(x)}dx,
\end{equation}
which is the Laplace transform of the reciprocal gamma function.$^3$\footnote{$^3$
The value at $\theta=0$ is called the Frans\'en-Robertson constant and denoted
by $\varphi(0)=F$. A lot of analytical and numerical material on
$\varphi(\theta)$ can be found in \cite{FW1984}. That paper
contains also several references to related work going back 
to Paley, Wiener, Hardy and Ramanujan.
Note, that the reciprocal gamma distribution in \cite{FW1984} is not to be
confused with the inverse gamma distribution, which is often
called reciprocal gamma distribution as well.}
As $X$ has infinite variation, we have $W(0+)=0$.
Integrating (\ref{Wprime0}) yields the result.
\hfill\hfill$\square$\end{proof}


\subsection{Remarks on the cases $\alpha=-1/2$ and $\alpha=-1$}
The case $\alpha=-1/2$
is again covered in Theorem \ref{rational} above, but several simplifications occur.
The jump part of the ladder process is in this case a compound Poisson process with jumps from
the $\chi_1$-distribution. 
Again, we can avoid Mittag-Leffler functions and use the complimentary error function instead.
With regard to the roots of the polynomial equation we may say the following.
If $\zeta>0$ the polynomial $f_q(z)$ has degree~5 and its roots typically 
cannot be expressed in terms of radicals.
If $\zeta=0$ the polynomial is of degree~3 and Cardano's formula can be used.
If in addition $\kappa=0$ and $\varphi=0$ then we have
$r_1=\sqrt\gamma$ a double and $r_2=-\sqrt\gamma$ a single root.
We can proceed as is Section~\ref{ig-special} to obtain some simple closed
form expressions for $W(x)$ and $W^{(q)}(x)$.

When $\alpha  =-1$  the jump part of the parent process is
a compound Poisson process with negative exponential jumps,
and the ladder process is a compound Poisson process with exponential jumps.
The Laplace transform of the scale functions are rational,
and the scale functions can be obtained
directly from the partial fraction decomposition.
The results are implicitly contained in \cite{Mor} and \cite{KouWang} and we exclude the calculations here.

\section{Numerical illustrations}
Figure~\ref{fig1} contains graphs of the GTSC scale function $W(x)$ for stability parameters 
$\alpha=1/4$, $1/3$, $1/2$, $2/3$, $3/4$ in six cases.
Figure~\ref{fig2} contains graphs of the GTSC $q$-scale functions $W^{(q)}(x)$ with $q=1$, but otherwise
for the same parameters.
Below we introduce six cases considered in terms of the classification of the parent process.

\begin{itemize}
\item Case A, $\kappa=0$, $\varphi=0$, $\zeta=0$, $c=1$, $\gamma=1$:
The parent process is oscillating, has no diffusion part
and infinite variation jumps.
The ladder process is an infinite activity pure jump subordinator, has no linear drift, and is not killed.
\item Case B, $\kappa=1$, $\varphi=0$, $\zeta=0$, $c=1$, $\gamma=1$:
The parent process drifts to $+\infty$, has no diffusion part
and infinite variation jumps.
The ladder process is an infinite activity pure jump subordinator killed at rate $\kappa=1$.
\item Case C, $\kappa=0$, $\varphi=1$, $\zeta=0$, $c=1$, $\gamma=1$:
The parent process drifts to $-\infty$, has no diffusion part
and infinite variation jumps.
The ladder process is an infinite activity pure jump subordinator.
\item Case D, $\kappa=0$, $\varphi=0$, $\zeta=1$, $c=1$, $\gamma=1$:
The parent process is oscillating, has no linear drift, but a diffusion part
and infinite variation jumps.
The ladder process is an infinite activity pure jump subordinator plus a linear drift.
\item Case E, $\kappa=1$, $\varphi=0$, $\zeta=1$, $c=1$, $\gamma=1$:
The parent process drifts to $+\infty$, has a Gaussian component
and infinite variation jumps.
The ladder process is an infinite activity subordinator plus linear drift, killed at unit rate.
\item Case F, $\kappa=0$, $\varphi=1$, $\zeta=1$, $c=1$, $\gamma=1$:
The parent process drifts to $-\infty$, has a diffusion part
and infinite variation jumps.
The ladder process is an infinite activity pure jump subordinator plus a linear drift.
\end{itemize}
Now let us discuss the graphs in view of the theoretical properties listed in Section~\ref{known-analytical}.

\bigskip

\noindent{\bf  Smoothness:} All graphs look smooth, as the theory predicts.

\bigskip

\noindent{\bf  Concavity and convexity:} 
For $q=0$ we know in cases A, B,  D and E the graph is predicted to be concave. In cases $C$ and $F$ the graph is predicted to be convex-concave. Indeed this is the appearance in  
Figure~\ref{fig1}. 

For $q=1$ we know, that all graphs are concave-convex. We observe this behaviour in Figure~\ref{fig2}. In some cases, it is necessary to inspect the shape of the graph more closely in the neighbourhood of the origin in order to see concavity.

\bigskip

\noindent{\bf  Behaviour at zero:} 
For all cases we observe $W(0)=0$ in Figure~\ref{fig1} and $W^{(q)}(0)=0$
in Figure~\ref{fig2}, in agreement with formula~(\ref{W(0)}).

For the cases A--C (no diffusion part) we observe $W'(0)=+\infty$,
for the cases D--E (nonzero diffusion part) we observe $W'(0)=1$ 
in Figure~\ref{fig1}. We observe the same behaviour, 
$W^{(q)\prime}(0)=\infty$ resp.\ $W^{(q)\prime}(0)=1$, in Figure~\ref{fig2}.
This is in agreement with formula~(\ref{W'(0)}).

\bigskip

\noindent{\bf  Behaviour at infinity:}
Firstly let us consider the case $q=0$.
In cases A and D we have $\psi'(0+)=0$ and thus, according to (\ref{becauseofrenewal}), asymptotic linear growth as $x\to\infty$. 
In cases B and E we have $\kappa>0$ and hence $\psi'(0+)>0$ so that  $W(x)\to1/\kappa$ as $x\to\infty$.
In cases C and F we have $\varphi>0$ and hence $\psi'(0)<0$ and so there is exponential growth
of $W(x)$ as $x\to\infty$ according to the second case in formula (\ref{W(infty)}).
This corresponds to what is observed in the graphs in Figure~\ref{fig1}. (The behaviour for $x\to\infty$
becomes more prominent when plotting $0<x<20$.)

For the case $q>0$, all graphs in Figure~\ref{fig2} appear exhibit exponential growth as $x\to\infty$, 
in agreement with formula~(\ref{Wq(infty)}). 

\begin{figure}
\begin{tabular}{cc}
\includegraphics[width=7cm,height=6cm]{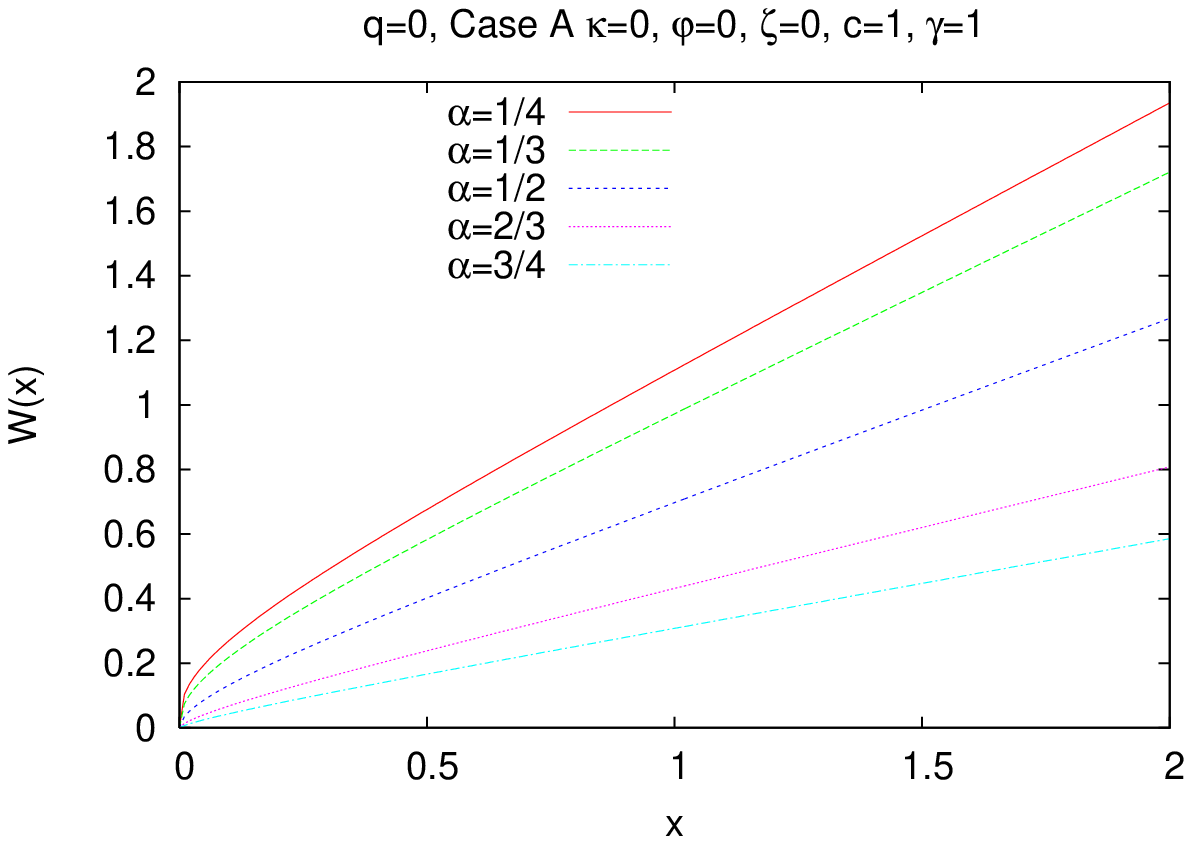}&\includegraphics[width=7cm,height=6cm]{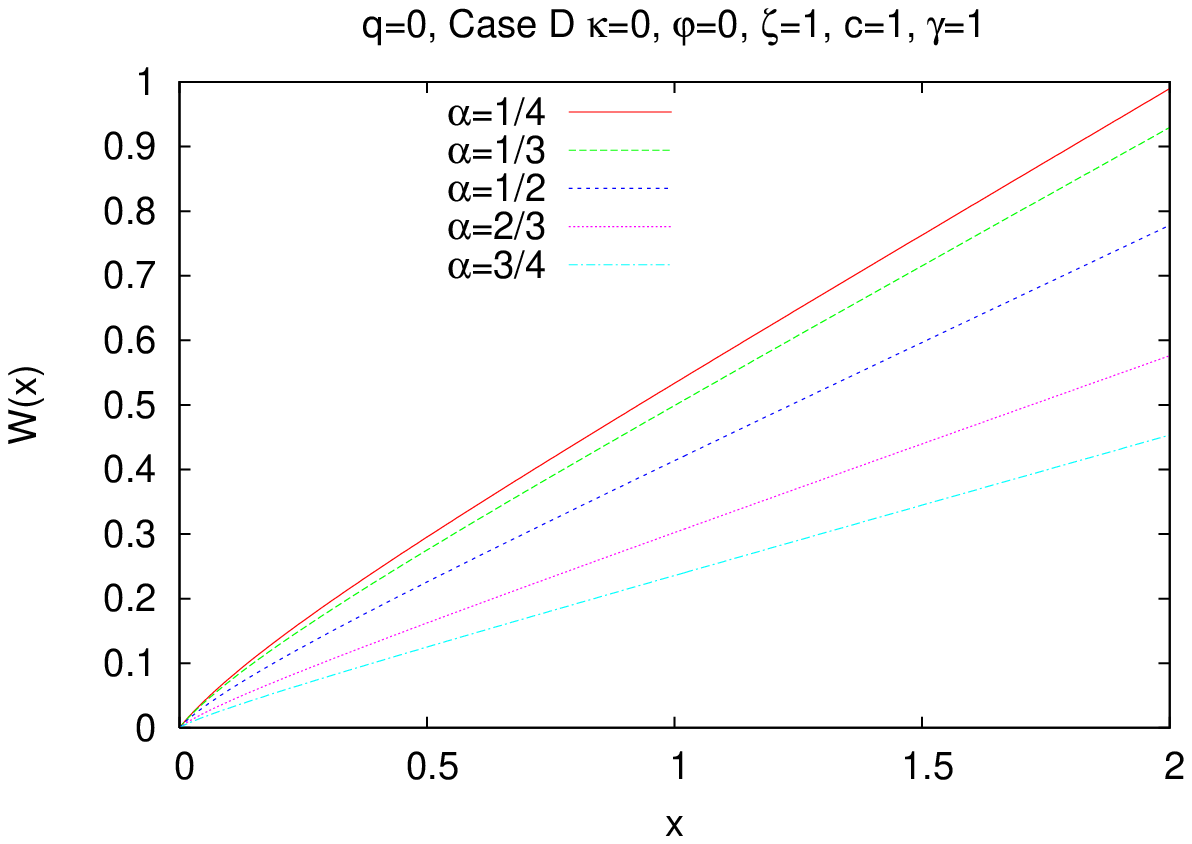}\\
\includegraphics[width=7cm,height=6cm]{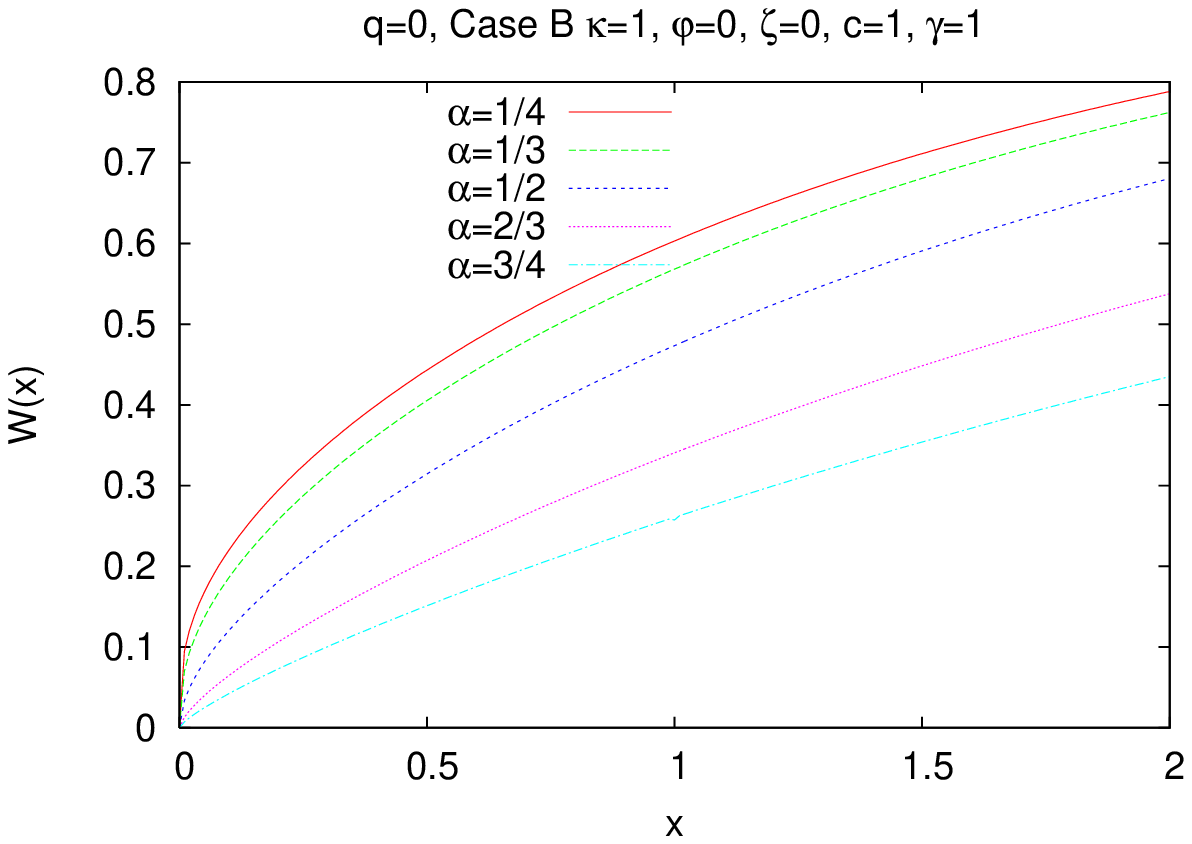}&\includegraphics[width=7cm,height=6cm]{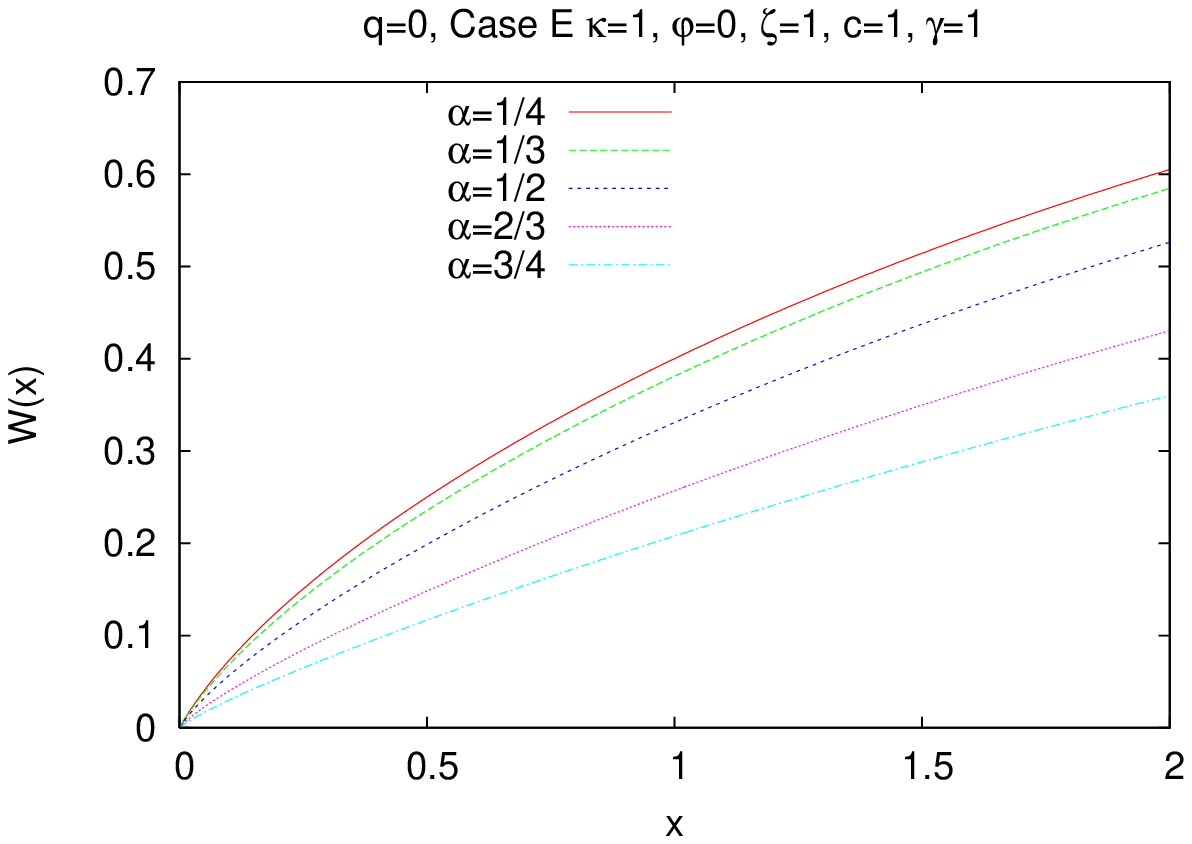}\\
\includegraphics[width=7cm,height=6cm]{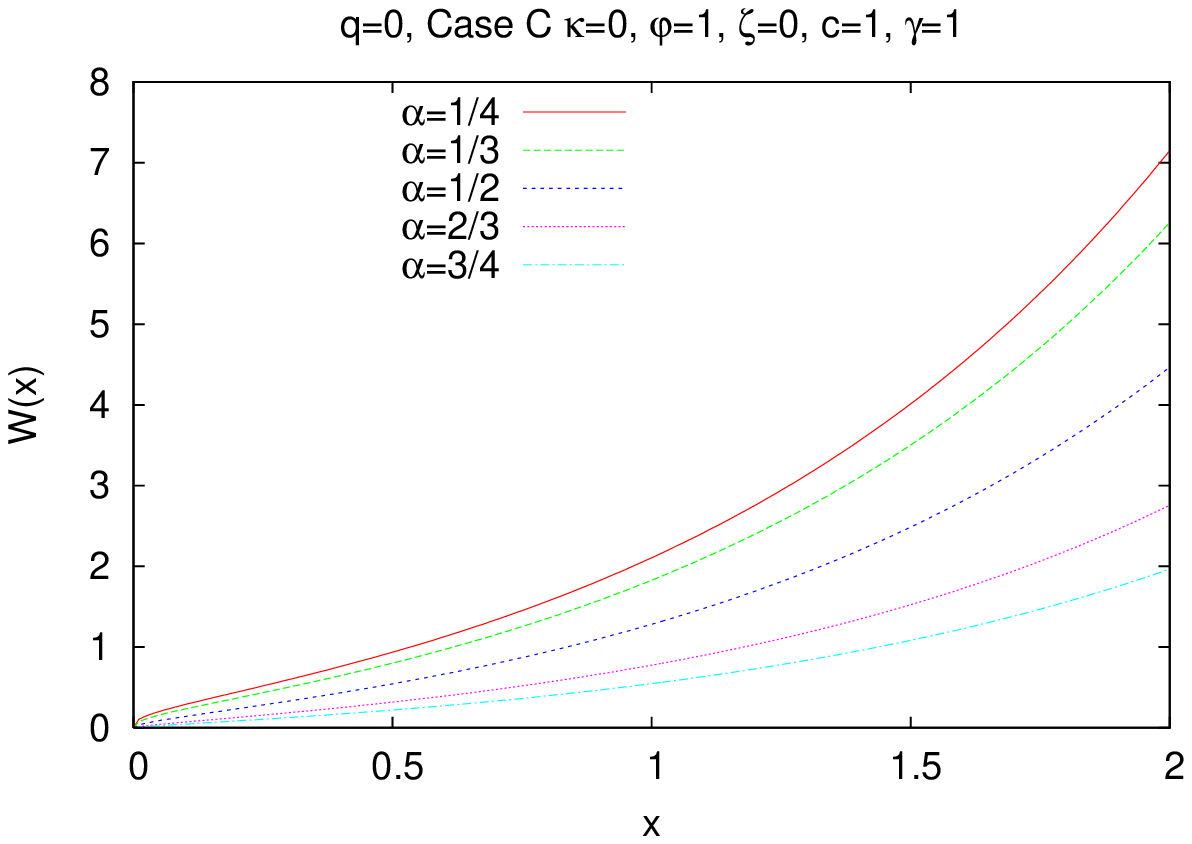}&\includegraphics[width=7cm,height=6cm]{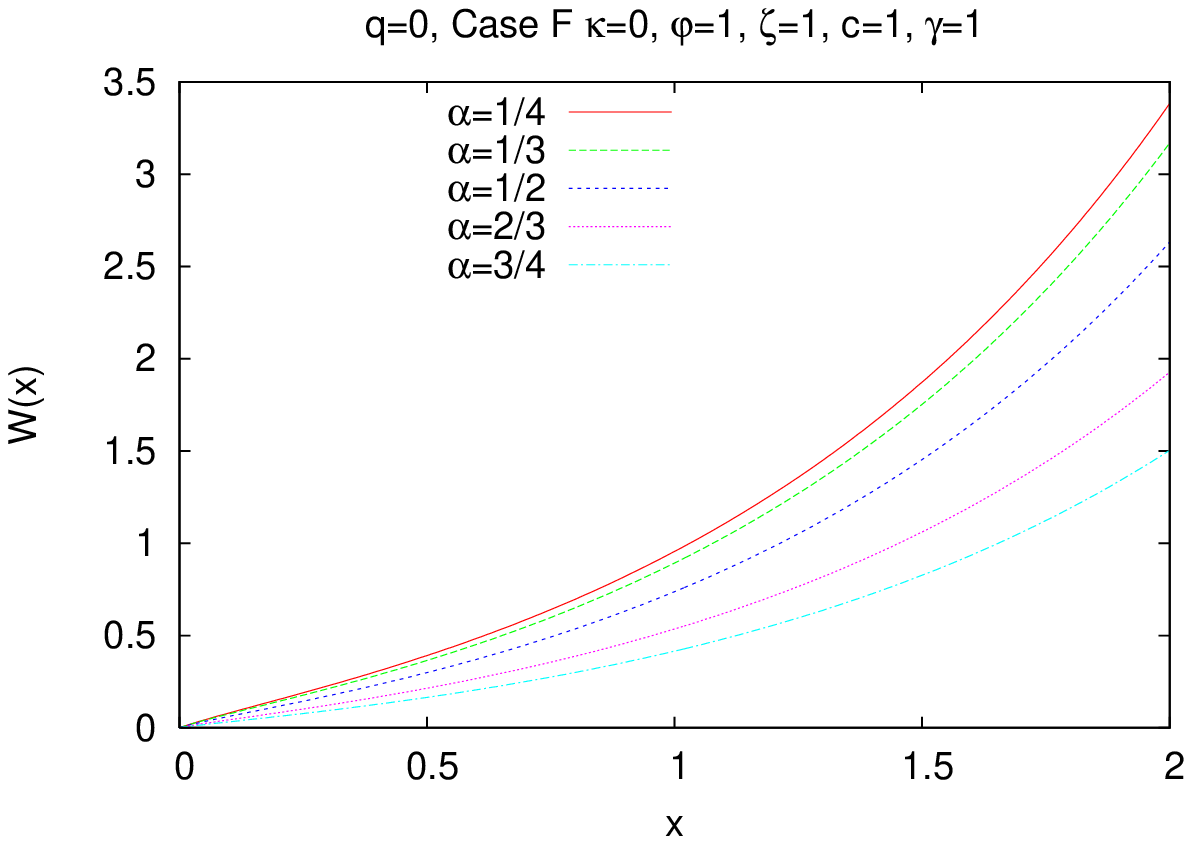}
\end{tabular}
\caption{Scale functions $W(x)$ for the GTSC class with $0<\alpha<1$}
\label{fig1}
\end{figure}
\begin{figure}
\begin{tabular}{cc}
\includegraphics[width=7cm,height=6cm]{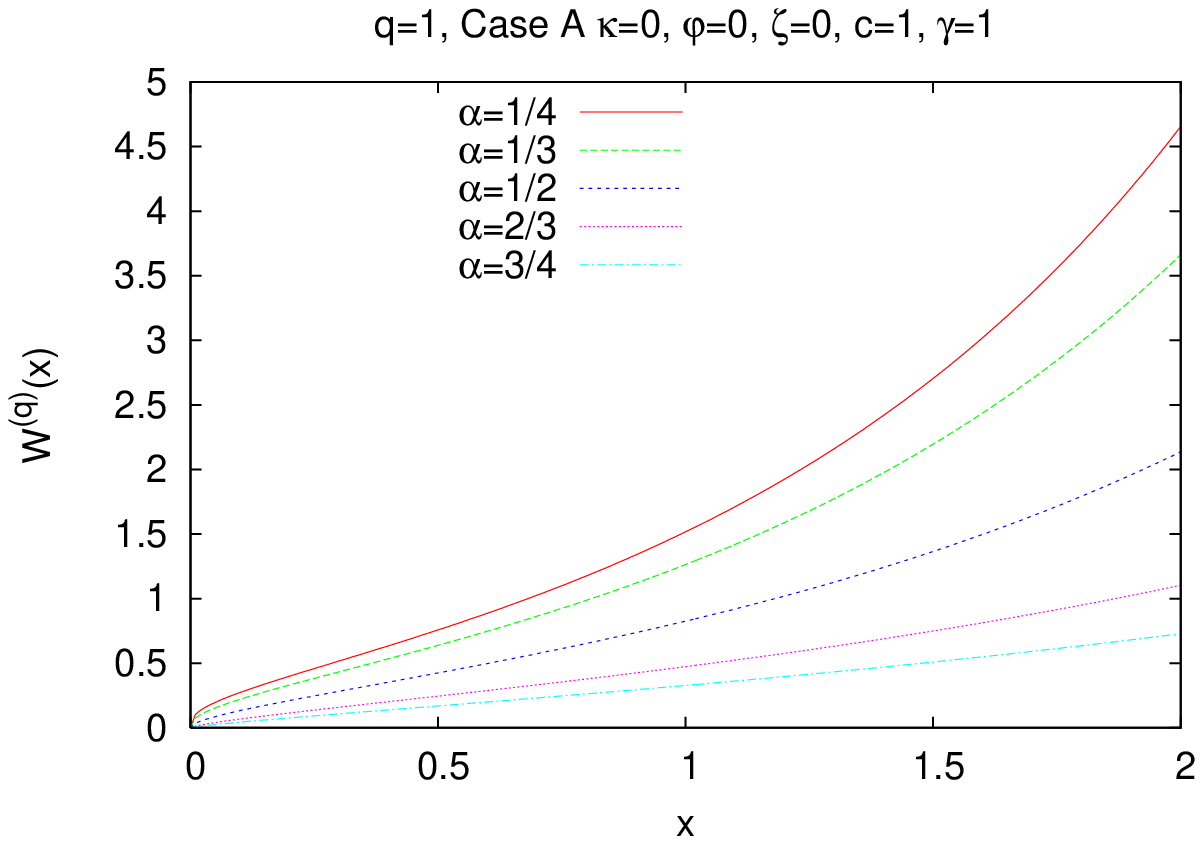}&\includegraphics[width=7cm,height=6cm]{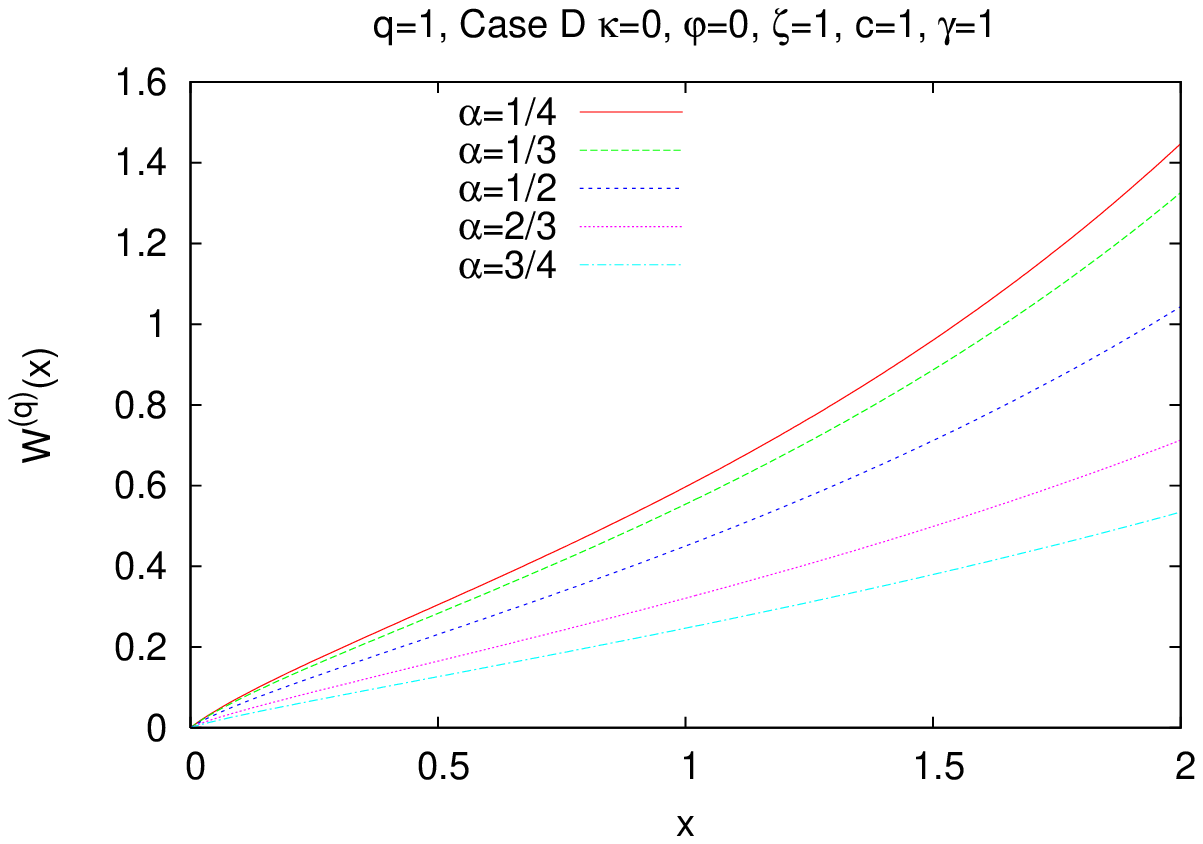}\\
\includegraphics[width=7cm,height=6cm]{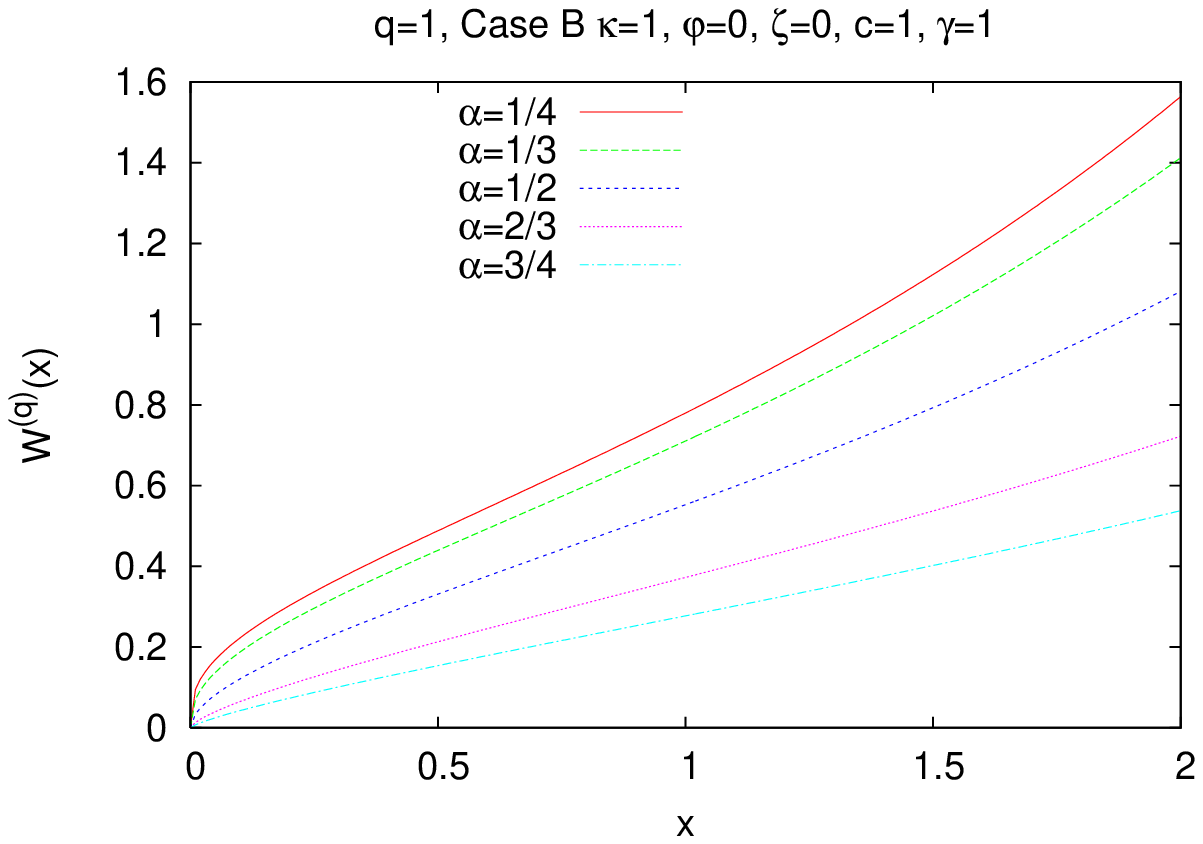}&\includegraphics[width=7cm,height=6cm]{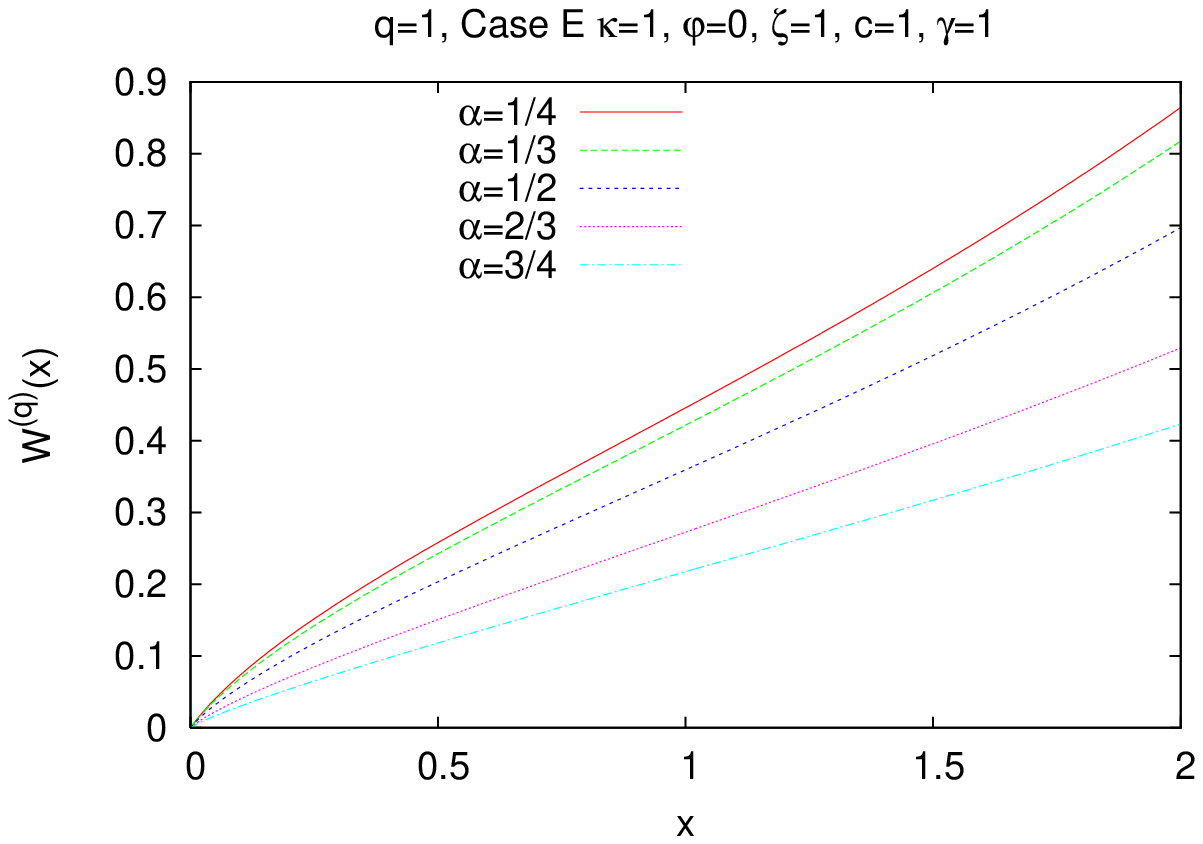}\\
\includegraphics[width=7cm,height=6cm]{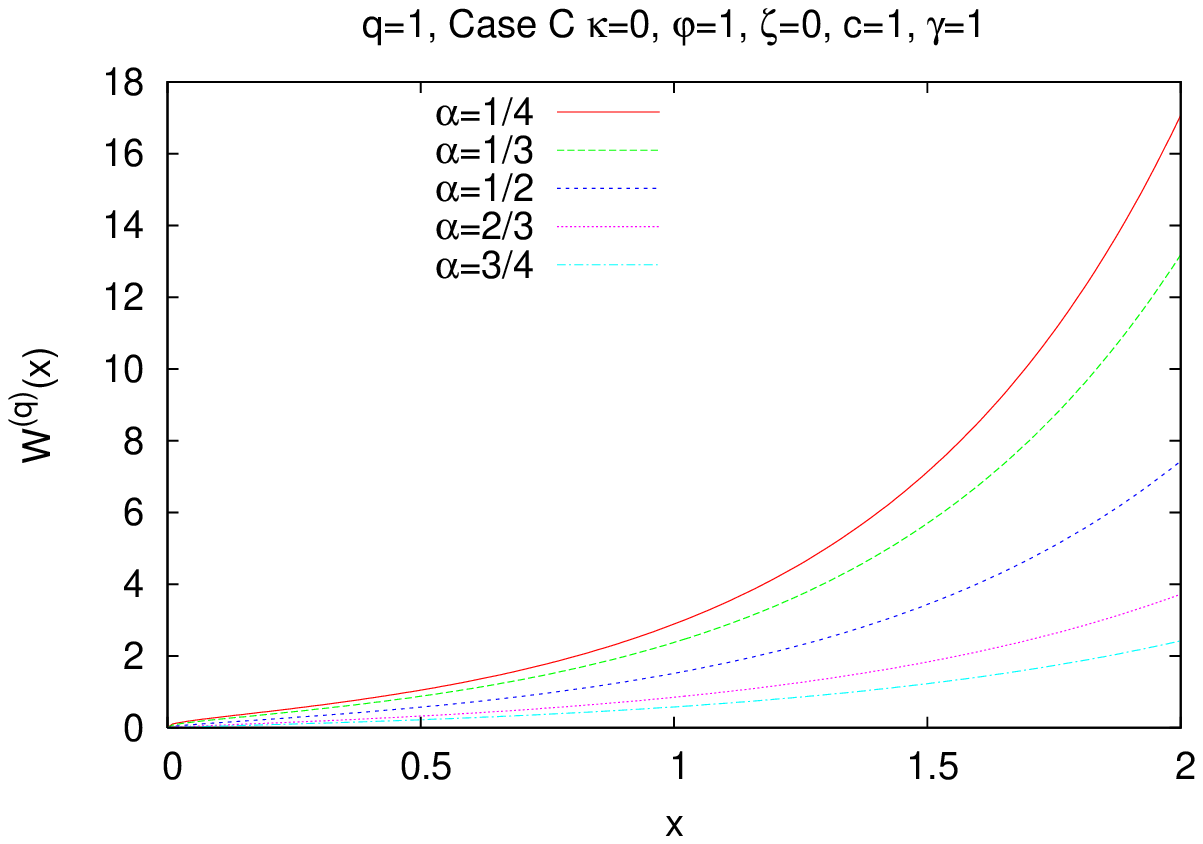}&\includegraphics[width=7cm,height=6cm]{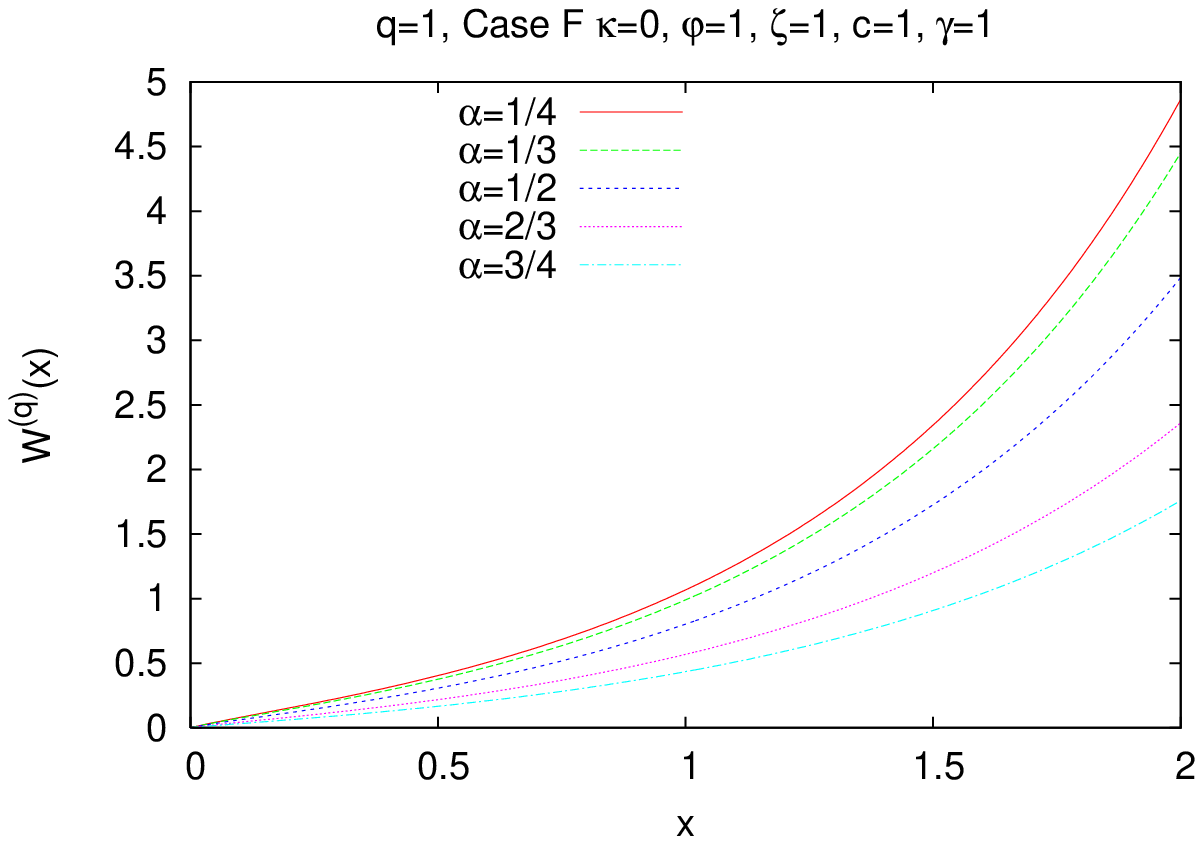}
\end{tabular}
\caption{Scale functions $W^{(q)}(x)$ for the GTSC class 
with $q=1$ and $0<\alpha<1$}
\label{fig2}
\end{figure}

\section{Additional Remarks}
\subsection{Extensions and ramifications}
Our methods apply also to the {\em integrated $q$-scale functions},
see \cite[Theorem~8.1, p.214]{Kyp2006}
\begin{equation}
Z^{(q)}(x)=1+q\int_0^xW^{(q)}(y)\,dy,
\end{equation}
for the Gaussian tempered stable convolution class with rational stability
parameter.

Also we could treat more complicated Gaussian tempered stable convolutions
with
\begin{equation}
\phi(\theta)=\kappa+\zeta\theta+\sum_{i=1}^nc_i\Gamma(-\alpha_i)
(\gamma_i^{\alpha_i}-(\gamma_i+\theta)^{\alpha_i})
\end{equation}
with $\alpha_1,\ldots,\alpha_n$ rational numbers in $[-1,1)$.

The key observation is simply, that any Laplace transform, that
is of the form $f(\theta^\alpha)$ with $f$ a rational function
and $\alpha$ some number,
can be inverted in terms of the partial fraction decomposition
of $f$ using Mittag-Leffler functions and its derivatives.

In other very recent work, \cite{KR2007} have looked at families of scale functions which are the result of choosing the descending ladder height process to have a Laplace exponent which is a special or complete Bernstein function.

\subsection{Numerical inversion of the Laplace transform}
While our formulas provide closed form expressions for rational
$\alpha$ in terms of partial fraction decompositions and Mittag-Leffler
functions, those expression can be rather lengthy,
involve the zeros of polynomials of high degree, 
and, experience shows them to be numerically problematic when $\alpha=m/n$ 
unless $n$ is quite small.

Alternatively one may always consider numerical inversion of
the Laplace transform of $W^{(q)}$, which is extremely simple
for the GTSC class. Moreover this approach works also
for irrational $\alpha$.
We use the Bromwich inversion integral for the Laplace transform,
\begin{equation}\label{Bromwich}
W^{(q)}(x)=\frac1{2\pi i}
\int\limits_{r-i\infty}^{r+i\infty}\frac{e^{\theta x}d\theta}{\psi(\theta)-q}
\end{equation}
where $r>\Phi(q)$.
Let us consider the integrand for $|\theta|\to\infty$ with $\Re(\theta)\geq r$.
The term $e^{\theta x}$ is bounded.
If $\zeta>0$ we have $\psi(\theta)\sim\zeta\theta^2$,
if $\zeta=0$ and $0<\alpha<1$ we have $\psi(\theta)\sim 
-c\Gamma(-\alpha)\theta^{\alpha+1}$ and (\ref{Bromwich}) is a proper
Lebesgue integral. Otherwise it must be interpreted as a principal
value integral. 
Some remarks how to handle this situation and how to improve the numerics
and a piece of Mathematica code, that evaluates the scale function by 
(straightforward, and numerically unsophisticated) integration, 
are in the extended report.
For further material on the numerical evaluation of scale function
see also \cite[Chapter7, p.111ff]{Surya-Thesis}.

\subsection{Examples of applications}
Let us briefly conclude by re-enforcing the genuine importance of establishing explicit examples of scale functions from the point of view of modelling  by giving three classical examples from the theory of applied probability. It will suffice to consider the role of $0$-scale functions. We shall do so in the light of some of the explicit examples above.

\begin{example}
Recent literature suggests that a more modern approach to the theory of ruin
should involve considering, in place of the classical Cram\'er-Lundberg process,
a spectrally negative L\'evy process which drifts to $\infty$ for the risk
process. As noted earlier, for any such given L\'evy process with Laplace
exponent $\psi$,  the quantity $1 - \psi'(0+)W(x)$ is the probability of ruin
when the L\'evy process is issued from $x$ at time $0$.

Consider the following parent process
\begin{equation}
X_t  = (\kappa+\lambda) t - S^{(1)}_t - S^{(2)}_t, \, t\geq 0
\end{equation}
where $\{S^{(i)}_t : t\geq 0\}$, $i=1,2$,  are subordinators 
whose \Levy{} measures, $\nu^{(i)}$, $i=1,2$, are identified by 
\begin{equation}
\nu^{(1)}(dx) = \lambda \frac{\gamma^{\nu+1}}{\Gamma(\nu)}x^{\nu - 1} e^{-\gamma x}dx \ \text{ and }  \ \nu^{(2)}(dx)= \lambda \frac{\gamma^\nu}{\Gamma(\nu)}(1-\nu) x^{\nu-2}e^{-\gamma x}dx
\end{equation}
for $x>0$.

Following the interpretation of L\'evy risk processes in \cite{KK2006} and \cite{Vondra} we may think of the parent process as the result of premiums collected at rate $\lambda+\kappa$ to offset the result of two competing claim processes. The first has claims arriving at a compound Poisson rate $\lambda$ which are gamma distributed and the second has an infinite intensity of small claims whose aggregate behaviour is similar to that of a stable subordinator with index $1-\nu$. Moreover, these claims are interlaced with larger claims of finite intensity, the tail of whose distribution are similar to those of gamma distributed jumps.

Recalling the discussion in Section \ref{TSparent} we have from the second part of Theorem \ref{niceform}  that the probability of ruin from an initial reserve $x>0$ is conveniently given by
\begin{equation}
1 -  \frac{1}{\lambda+\kappa} -\frac{\rho\gamma^\nu}{\lambda +\kappa}\int_0^x
y^{\nu-1}e^{-\gamma y} E_{\nu,\nu}(\rho\gamma^\nu y^\nu) d y
\end{equation}
\end{example}

\begin{example} 
If instead we consider the parent process in the previous example reflected in its supremum, then we are looking at the
workload of a so called $M/\Pi/1$ queue (cf \cite{DGM2004}). That is to say, a queue in which work is processed with a
constant rate, say $\mu$, and which arrives in packets according to a Poisson point process such that a job of size $x>0$
arrives in the interval $(t,t+dt)$ with probability $\Pi(dx)dt + o(dt)$ where $\int_0^\infty (1\wedge  x)\Pi(dx)<\infty$.
For the case at hand $\mu = \lambda+\kappa$ and $\Pi(dx)  = \nu^{(1)}(dx) +\nu^{(2)}(dx)$ and there is again the possibility
of interpreting the incoming work as the result of two competing mechanisms. In this case, the stationary distribution of
the workload is given by one minus the ruin probability given above.
\end{example}

\begin{example}
In the very recent work of \cite{Ronnie2007}, the classical dividend control problem of De Finetti's dividend problem has
been considered in the context of the driving risk process being a general spectrally negative L\'evy process denoted as
usual $X$. Here the objective is to find the optimal strategy and value to the following control problem
\begin{equation}
v(x) = \sup_{\pi}\mathbb{E}_x\left(\int_0^{\sigma^\pi} e^{-qt }dL^\pi_ t\right)
\end{equation}
where $q>0$, $\sigma^\pi = \inf\{ t>0 :  X_t - L^\pi_t <0\}$ and  $L^\pi =\{L^\pi_t : t\geq 0\}$ is the process of dividends paid out associated with the strategy $\pi$ and the supremum is taken over all strategies $\pi$ such  that $L^\pi$ is a non-decreasing, left-continuous adapted process which starts at zero and for which ruin cannot occur by a dividend pay-out.

In \cite{Ronnie2007}, the author proves the remarkable fact that, under the assumption that the dual of the underlying L\'evy process
has a completely monotone density, for each $q>0$, the associated $q$-scale function, $W^{(q)}$, has a first derivative
which is strictly convex on $(0,\infty)$. Moreover, the point $a^* : = \inf\{a \geq 0 : W^{(q)\prime} (a)\leq
W^{(q)\prime}(x) \text{ for all }x\geq 0\}$ is the level of the barrier which characterizes the optimal strategy. The latter
being that dividends are paid out in such a way that the aggregate process has the dynamics of the underlying L\'evy process
reflected at the barrier $a^*$. The value function of this strategy is given by 
 \begin{equation}
 v(x) = \left\{
 \begin{array}{ll}
\frac{W^{(q)} (x)}{W^{(q)\prime}(a)} & \text{ for }0\leq x\leq a \\
x-a + \frac{W^{(q)} (a)}{W^{(q)\prime}(a)} & \text{ for }x> a
 \end{array}
 \right.
 \end{equation}
If we are to take any of the parent processes mentioned in this paper as the underlying
spectrally negative L\'evy process, then it is immediately obvious from (\ref{cmconv})
that they all have the property that their dual has a L\'evy density which is
completely monotone. In such a case one may observe the value $a^*$ graphically (see for example Figure \ref{fig2}) as well as compute it numerically to a reasonable degree of exactness  with the help of software such as Mathematica. 
\end{example}
\section*{Acknowledgments} This work was initiated when both authors were attending the
mini-workshop {\it L\'evy processes and Related Topics in Modelling} in Oberwolfach.  We are
grateful to the organizers of this workshop and MFO for the opportunity it has provided us. We
would also like to thank Ronnie Loeffen for hinting that previously unnoticed scale functions
are to be found in the references of \cite{Furrer1998}, \cite{Asm2000} and \cite{Abate-Whitt}.
Thanks also go to Victor Rivero who commented on earlier versions of this paper.
\bibliography{newscale-initials}

\end{document}